\documentclass[a4paper,11pt,english]{article}
\usepackage[english]{babel}
\usepackage{authblk}
\usepackage{amsmath}
\usepackage{amsthm}
\usepackage{anysize}
\usepackage{mathtools}
\usepackage{mathrsfs}
\usepackage{amssymb}
\usepackage{xcolor}
\usepackage{xfrac}
\usepackage{esint}
\usepackage{graphicx}
\usepackage{float}
\usepackage{array}
\usepackage{enumitem}
\usepackage{tikz-cd}
\usepackage{centernot}
\usepackage{stmaryrd}
\usepackage{comment}
\usepackage{bbm}

\usepackage{graphicx,color}
\usepackage{tikz,pgfplots,pgf}
\usepackage[font=small]{caption}


\numberwithin{equation}{section}

\newtheorem{theorem}{Theorem}[section]

\newtheorem{lemma}[theorem]{Lemma}
\newtheorem{proposition}[theorem]{Proposition}

\DeclareMathOperator{\loc}{loc}

\newcommand{\field}[1] {\mathbb{#1}}
\newcommand{\N}{\field{N}}

\newcommand{\R}{\field{R}}

\def\a{\alpha}

\def\e{\varepsilon}
\def\D{\Delta}

\def\g{\gamma}

\def\l{\lambda}

\def\O{\Omega}
\def\p{\partial}

\def\s{\sigma}

\def\v{\varphi}

\def\ua{\uparrow}

\newcommand{\mc}{\mathcal}
\newcommand{\mf}{\mathfrak}

\DeclareMathOperator{\dist}{dist}

\hfuzz40pt

\makeatletter
\@namedef{subjclassname@2020}{\textup{2020} Mathematics Subject Classification}
\makeatother

\author[1]{Juli\'{a}n L\'{o}pez-G\'{o}mez \thanks{jlopezgo@ucm.es}}
\author[2]{Juan Carlos Sampedro \thanks{juancarlos.sampedro@upm.es}}
\affil[1]{\small Universidad Complutense de Madrid\\ Instituto de Matem\'{a}tica Interdisciplinar\\ Department of Mathematical Analysis and Applied Mathematics \\ Plaza de las Ciencias 3\\ 28040 Madrid, Spain}
\affil[2]{\small Universidad Polit\'ecnica de Madrid\\ Instituto de Matem\'{a}tica Interdisciplinar \\ Departamento de Matem\'atica Aplicada a la Ingenier\'ia Industrial\\ Ronda de Valencia 3\\ 28012 Madrid, Spain}

\title{\textbf{Blow-up estimates and a priori bounds for the positive solutions of a class of superlinear indefinite elliptic problems}\footnote{This work has been supported by the Ministry of Science and Innovation of Spain under the 
Research Grant PID2021-123343NB-I00 and by the Institute of Interdisciplinary Mathematics of the Complutense University of Madrid}}

\begin{document}

\maketitle

\begin{abstract}
\noindent In this paper  we find out  some new blow-up estimates for the positive explosive solutions of a
paradigmatic class of  elliptic boundary value problems of superlinear indefinite type. These estimates are obtained by combining the scaling technique of Guidas--Spruck together with a  generalized De Giorgi--Moser weak Harnack inequality found, very recently, by  Sirakov \cite{SV0,SV}. In a further step, based on a comparison result of Amann and L\'opez-G\'omez
\cite{ALG},  we will show how these bounds provide us with some sharp a priori estimates for the classical positive solutions of a wide variety of superlinear indefinite problems. It turns out that this is the first general result where the decay rates of the potential in front of the nonlinearity ($a(x)$ in \eqref{1.1}) do not play any role for getting a priori bounds for the positive solutions when $N\geq 3$.
\end{abstract}

\smallskip
\noindent \textbf{Keywords:} Superlinear indefinite problems, Weak Harnack inequality, Reescaling arguments, Blow-up estimates, A priori bounds, Mixed boundary conditions.

\smallskip
\noindent \textbf{2020 MSC:} 65P30, 65N06,  35J25, 35J60

\section{Introduction}

In this paper we deliver some new blow-up estimates and universal a priori bounds for the positive solutions of the semilinear elliptic boundary value problem
\begin{equation}
	\label{1.1}
	\left\{\begin{array}{ll}
	\mathscr{L} u=\l u+a(x)u^r & \quad \hbox{in}\;\; \Omega, \\[1ex]
	\mathscr{B}u=0 & \quad \hbox{on}\;\; \p \Omega,
	\end{array}\right.
\end{equation}
where $\Omega$ is a bounded domain of $\mathbb{R}^{N}$, $N\geq 1$, of class $\mc{C}^{2}$, $\l\geq 0$, and
$$
  \mathscr{L}u=-\text{div}(A(x) \nabla u),
$$
$A$ being a matrix of order $N$ with entries in $W^{1,\infty}(\O)$ such that, for some $\mu>0$ and
every $\xi\in \R^{N}$ and $x\in\O$,
$$
    \langle A(x) \, \xi, \xi \rangle \geq \mu \, |\xi|^2 \equiv \mu \sum_{j=1}^N \xi_j^2.
$$
Thus, $\mathscr{L}$ is uniformly elliptic in $\O$. Moreover, in \eqref{1.1}, the boundary of $\O$, $\partial\O$, is divided  into two disjoint open and closed subsets $\Gamma_{0}$, $\Gamma_{1}\subset \partial\O$ where the boundary operator
$\mathscr{B}: \mc{C}(\Gamma_{0})\otimes \mc{C}^{1}(\Gamma_{1})\to \mc{C}(\partial\O)$ is defined through
\begin{equation}
\label{1.2}
\mathscr{B} u:=\left\{
	\begin{array}{ll}
		u & \text{ on } \Gamma_{0}, \\[1ex]
		\partial_{\nu}u+\beta(x)u & \text{ on } \Gamma_{1},
	\end{array}
	\right.
\end{equation}
where $\nu\in\mc{C}(\Gamma_{1},\R^{N})$ is an outward pointing nowhere tangent vector field,  and $\beta\in \mc{C}(\Gamma_{1})$, $\beta\geq 0$. Hence, $\mathscr{B}$ is the Dirichlet boundary operator on $\Gamma_{0}$ and the Neumann or a first order regular oblique derivative boundary operator on $\Gamma_{1}$.
In \eqref{1.1}, we also suppose that $a\in \mc{C}(\bar{\O})$ and $r>1$.
Throughout this paper, setting
\begin{equation*}
	\O_{+}:=\{x\in\O \, : \, a(x)>0\}, \quad \O_{-}:=\{x\in \O \, : \, a(x)<0\}, \quad \O_{0}:=
   a^{-1}(0),
\end{equation*}
we will assume that $\O_{+}$ and $\O_{-}$ are of class $\mc{C}^{2}$, and that
$$
   \O_{+}\neq \emptyset.
$$
As $a\in\mc{C}(\bar\O)$,  $\O_{+}$ and $\O_{-}$ are open subsets of $\O$, while
$\O_0$ is a compact subset of $\bar \O$. Moreover, $a$ is bounded away from zero on compact subsets of $\O_{\pm}$. Since $\O_{+}$ and $\O_{-}$ are of class $\mc{C}^{2}$, their boundaries  can have, at most, finitely many components. Finally, we require that, for every component $\Gamma$ of $\Gamma_{1}$,
\begin{equation}
\label{1.3}
\Gamma\subset \partial\O_\pm \quad \hbox{if}\;\; \Gamma\cap \partial\O_\pm \neq \emptyset.
\end{equation}
Fixed $p>N$, by a solution of \eqref{1.1}, we mean a pair $(\lambda,u)\in \mathbb{R}_{+} \times  W^{2,p}(\Omega)$ such that $u$ satisfies \eqref{1.1} almost everywhere. A positive solution of \eqref{1.1} is a solution $(\lambda,u)\in \R_{+} \times W^{2,p}(\Omega)$ such that $u\geq 0$ in $\O$ but $u\neq 0$ (i.e., $u\gneq 0$). Since $r>1$, by the maximum principle, the positive solutions satisfy $u\gg 0$, in the sense that $u(x)>0$ for all $x\in\Omega$ and $\frac{\p u}{\p \nu}(x)<0$ for all $x\in u^{-1}(0)\cap \p\O$. This concept of solution makes sense because,  by the Sobolev embeddings,  $W^{2,p}(\O) \hookrightarrow \mc{C}^{2-N/p}(\bar{\O})$ if $p>N$. So, the boundary operator
$\mathscr{B}$  acts on the classical sense.
\par
As $\O_{+}\neq \emptyset$, this setting is wide enough as to include purely superlinear problems ($\O=\O_{+}$), general superlinear problems ($\O_{-}=\emptyset$) as well as  superlinear indefinite problems ($\O_{-}\neq \emptyset$). In this paper we will focus attention on superlinear indefinite problems and on general superlinear problems without specific boundary conditions.
\par
The problem of the existence of a priori bounds for purely superlinear  problems ($\O=\O_{+}$) has attracted a huge amount of attention since the paper of Br\'ezis and Turner \cite{BT} established
the existence of a priori bounds, through some Hardy--Sobolev type inequalities, for the positive weak solutions of
\begin{equation}
	\label{1.4}
	\left\{\begin{array}{ll}
	-\D u=\l u+a(x)u^r & \quad \hbox{in}\;\; \Omega, \\[1ex]
	u=0 & \quad \hbox{on}\;\; \partial\O,
	\end{array}\right.
\end{equation}
provided
$$
  r<p_{BT}\equiv \frac{N+1}{N-1}.
$$
Later,  Gidas and Spruck \cite{GS} got the existence of a priori bounds for the classical positive solutions of \eqref{1.4} for all $r>1$ if $N=2$, and for
$$
    r<p_{GS}\equiv \frac{N+2}{N-2} \quad  \text{if } N\geq 3.
$$
On this occasion the proof consisted of a blowing-up technique reducing the original equation to a Liouville type problem. Almost simultaneously, the same result was found by De Figueiredo, Lions and Nussbaum \cite{FLN} by exploiting the symmetry properties of the Laplace operator.
\par
Although the critical exponent $p_{BT}$ might be thought to
be of a technical nature, Souplet \cite{S} has recently shown that it is actually optimal for weak solutions. Indeed, according to \cite{S}, whenever $p > p_{BT}$, there is a suitable nonlinearity $f(x,u) = a(x)u^{p}$, with $a(x) \geq 0$ and $a \in L^{\infty}(\O)$, for which \eqref{1.4} admits a positive unbounded weak solution. \par
More recently, Sirakov \cite{SV0} has generalized these classical results obtaining the following theorem with no need to impose boundary conditions on the whole domain. Our statement is an adaptation of \cite[Th. 2]{SV0} to our setting. It should be noted that this result applies to  general superlinear problems in the sense that, in contrast to the classical results, $a(x)$ is allowed to vanish somewhere in $\O$. Subsequently, we will denote by $\mc{P}_\l$ the $\l$-projection operator, $\mc{P}_\l(\l,u)=\l$.

\begin{theorem}[\textbf{Sirakov, 2020}]
	\label{SI}
Suppose $\O_{-}=\emptyset$ and $\Gamma\subset \partial\O$ is a  relatively open subset of $\partial\O$
of class $\mc{C}^{1,1}$ regularity.  Let $\mc{D}$ be a subdomain of $\R^N$ such that $\bar{\mc{D}}\subset \Omega\cup\Gamma$ and either
\begin{equation*}
	r<\frac{N+1}{N-1}, \quad \hbox{or} \;\;  r<\frac{N}{N-2} \;\; \hbox{if} \;\; \bar{\mc{D}}
 \subset \O.
\end{equation*}
Then, for every subset of positive solutions, $\mathscr{S}\subset \R_{+}\times W^{2,p}(\O)$, of
\begin{equation}
		\label{1.5}
		\left\{\begin{array}{ll}
			\mathscr{L} u=\l u + a(x)u^r & \quad \hbox{in}\;\; \Omega, \\[1ex]
			u=0 & \quad \hbox{on}\;\; \Gamma,
		\end{array}\right.
\end{equation}
with $\l$-projection $\Lambda_{\mathscr{S}} \equiv \mc{P}_\l(\mathscr{S})$ bounded,  there exists
a constant $C>0$ such that
$$
    \sup_{(\l,u)\in \mathscr{S}}\|u\|_{L^{\infty}(\mc{D})}\leq  C.
$$
\end{theorem}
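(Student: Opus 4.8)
The plan is to adapt the Gidas--Spruck blow-up (rescaling) argument to the present divergence-form operator with mixed boundary conditions, using Sirakov's generalized weak Harnack inequality as the key compactness tool. Suppose, for contradiction, that the conclusion fails: then there is a sequence $(\l_n,u_n)\in\mathscr{S}$ with $\l_n\to\l_\infty\in\Lambda_{\mathscr{S}}$ (bounded, so a convergent subsequence exists) and $M_n:=\|u_n\|_{L^\infty(\mc{D})}\to\infty$. Pick $x_n\in\bar{\mc D}$ with $u_n(x_n)=M_n$ (or nearly so). The first step is to understand where the $x_n$ can accumulate: since $\bar{\mc D}\subset\Omega\cup\Gamma$, after passing to a subsequence $x_n\to x_\infty\in\bar{\mc D}$, and $x_\infty$ is either an interior point of $\Omega$ or a point of $\Gamma$; in either case $a$ is continuous near $x_\infty$ and $a(x_\infty)\ge 0$ (since $\O_-=\emptyset$).

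Next I would perform the rescaling. Set $\mu_n:=M_n^{-(r-1)/2}\to 0$ and define $v_n(y):=\mu_n^{2/(r-1)}u_n(x_n+\mu_n y)=M_n^{-1}u_n(x_n+\mu_n y)$ on the rescaled domain $\O_n:=\mu_n^{-1}(\Omega-x_n)$. Because the coefficients of $A$ lie in $W^{1,\infty}$, the rescaled operators $\mathscr L_n v:=-\operatorname{div}(A(x_n+\mu_n y)\nabla v)$ have uniformly elliptic, uniformly bounded, and (after a linear change of coordinates diagonalizing $A(x_\infty)$) locally uniformly convergent coefficients; the equation becomes $\mathscr L_n v_n=\mu_n^2\l_n v_n+a(x_n+\mu_n y)v_n^r$ with $0\le v_n\le 1$ and $v_n(0)=1$. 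Standard $W^{2,p}_{\mathrm{loc}}$ (or $\mc C^{1,\alpha}_{\mathrm{loc}}$) elliptic estimates give a subsequence converging in $\mc C^1_{\mathrm{loc}}$ to a bounded $v\ge 0$ with $v(0)=1$ solving a limiting problem of Liouville type: on all of $\R^N$ (if $\dist(x_n,\partial\Omega)/\mu_n\to\infty$) the constant-coefficient equation $-\operatorname{div}(A_\infty\nabla v)=a(x_\infty)v^r$, or on a half-space with the corresponding Dirichlet condition on the flattened $\Gamma$ (if that distance stays bounded). After an affine change of variables normalizing $A_\infty$ to the identity, the interior case gives $-\Delta v=a(x_\infty)v^r$ on $\R^N$, and the boundary case gives $-\Delta v=a(x_\infty)v^r$ on $\R^N_+$ with $v=0$ on $\partial\R^N_+$.

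The conclusion then follows from the Liouville-type nonexistence theorems: if $a(x_\infty)=0$ the limit is a bounded nonnegative $\mathscr L_\infty$-harmonic function on $\R^N$ or on a half-space with zero boundary data, hence identically $0$ (constant by Liouville, then $0$ by the boundary condition in the half-space case), contradicting $v(0)=1$; if $a(x_\infty)>0$, in the interior case the Gidas--Spruck theorem forbids a positive bounded solution of $-\Delta v=a(x_\infty)v^r$ on $\R^N$ for $1<r<\tfrac{N}{N-2}$ (and for all $r>1$ when $N\le 2$) --- note the subcritical exponent here is exactly the $\tfrac{N}{N-2}$ appearing in the hypothesis for $\bar{\mc D}\subset\Omega$ --- while in the half-space case the relevant range is $1<r<\tfrac{N+1}{N-1}$, again matching the hypothesis. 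This contradiction proves the asserted bound.

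The main obstacle --- and the place where Sirakov's weak Harnack inequality enters, rather than classical interior Harnack --- is controlling the behavior near $\Gamma$ and ensuring the blow-up limit $v$ is genuinely nontrivial and the rescaled sequence does not degenerate: one must show the maximum point $x_n$ cannot escape to a ``bad'' part of $\partial\Omega$ away from $\Gamma$, and one needs the uniform (up-to-the-boundary on $\Gamma$) Harnack-type estimate to pass from ``$v_n(0)=1$'' to a nondegenerate limit and to justify the $\mc C^1_{\mathrm{loc}}$ convergence with the correct boundary condition; handling the mixed, possibly oblique, part of $\mathscr B$ and verifying \eqref{1.3} guarantees $\Gamma$ stays within a region where $a$ does not change sign are the delicate points. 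Once the rescaled problem and its limit are set up correctly, the rest is the now-standard Liouville dichotomy.
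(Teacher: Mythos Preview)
The paper does not give its own proof of Theorem~\ref{SI}; it is quoted from Sirakov \cite{SV0} and the paper only indicates that the argument is ``derived from the generalized weak Harnack inequality of De Giorgi and Moser type'' of \cite{SV}. Sirakov's method is \emph{not} a Gidas--Spruck rescaling: it is a direct integral argument. One tests the equation against the principal eigenfunction (or a suitable weight), uses the boundary weak Harnack inequality \eqref{1.9} to bound $\|u\|_{L^q}$ for $q<\tfrac{N}{N-1}$ (or the interior version for $q<\tfrac{N}{N-2}$), and closes the estimate without ever passing to a limit problem or invoking a Liouville theorem. This is precisely why the exponents in the statement are $\tfrac{N+1}{N-1}$ and $\tfrac{N}{N-2}$: they are the exponents dictated by the weak Harnack inequalities, not the Sobolev exponents $\tfrac{N+2}{N-2}$ that a blow-up argument would produce.

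Your outline therefore follows a genuinely different route, and it has a real gap. You set $M_n=\|u_n\|_{L^\infty(\mc D)}$ and claim $0\le v_n\le 1$, but $M_n$ is only the supremum over $\mc D$, not over $\Omega$; the rescaled function $v_n$ is bounded by $1$ only on $\mu_n^{-1}(\mc D-x_n)$, and nothing in the hypotheses controls $u_n$ just outside $\mc D$. Without a uniform bound on $v_n$ in a full neighbourhood of $0$ (in $\Omega_n$), the $W^{2,p}$ estimates and the compactness step fail. The standard remedy in the blow-up literature is a doubling lemma (Pol\'a\v{c}ik--Quittner--Souplet), which you do not invoke; the vague reference to Sirakov's Harnack inequality as a ``compactness tool'' does not fill this hole. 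A secondary issue is that the Liouville ranges you quote are off: Gidas--Spruck forbids positive solutions of $-\Delta v=v^r$ on $\R^N$ (and on $\R^N_+$ with Dirichlet data) for $r<\tfrac{N+2}{N-2}$, not $r<\tfrac{N}{N-2}$ or $r<\tfrac{N+1}{N-1}$; if your blow-up argument went through it would actually yield a stronger conclusion than Theorem~\ref{SI}, which is another sign that the intended proof is not rescaling-based.
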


These bounds are derived from the  generalized  weak Harnack inequality of De Giorgi and Moser type
given  by Sirakov in \cite[Cor. 1.1]{SV}. Next, by their relevance in this paper, we are going to
deliver a short summary of these results at the light of the perspective adopted in  \cite{LG13}.
The classical  De Giorgi--Moser weak Harnack inequality
establishes that, for every $q\in [1,\tfrac{N}{N-2})$, $y\in\O$ and $R>0$ with $\bar B_{R}(y)\subset \O$, any weak non-negative superharmonic function $u\in H^{1}(\O)\equiv W^{1,2}(\O)$ satisfies
\begin{equation}
	\label{1.6}
 \|u\|_{L^{q}(B_{R}(y))}\leq C \inf_{B_{R/2}(y)} u,
\end{equation}
where $C=C(N,q,R)$. On the other hand, by the uniform decay property of Hopf and Walter (see, e.g., \cite[Th. 1.5]{LG13}), there exists $M=M(R)>0$ such that, for every positive superharmonic $u\in W^{2,p}(\O)$,
\begin{equation}
	\label{1.7}
u(x)\geq M \Big(\inf_{B_{R/2}(y)}u \Big)\dist(x, \partial B_{R}(y)).
\end{equation}
Combining \eqref{1.6} and \eqref{1.7}, it follows the estimate
\begin{equation}
	\label{1.8}
	u(x)\geq C  \|u\|_{L^{q}(B_{R}(y))} \dist(x, \partial B_{R}(y)),
\end{equation}
where $C=C(N,q,R)$. Actually, \eqref{1.8} holds by replacing $B_{R}(y)$ by any subdomain $\mc{D}$ with
$\bar{\mc{D}} \subset \O$. The Sirakov result extends \eqref{1.8} up to the boundary $\p\O$ by establishing that, for every weak non-negative superharmonic function $u\in H^{1}(\O)$ and any $q\in [1,\frac{N}{N-1})$,
\begin{equation}
\label{1.9}
	\|u\|_{L^{q}(\O)}\leq C  \inf_{\O}\frac{u}{d}, \qquad d(x)=\dist(x,\p\O),
\end{equation}
where $C=C(N,q)$. The exponent $\tfrac{N}{N-1}$ is optimal in the sense that this estimate fails
if $q\geq \tfrac{N}{N-1}$. From \eqref{1.9},  Sirakov \cite{SV} infers the following result.

\begin{theorem}[\textbf{Sirakov, 2022}]
	\label{SK}
	Let $u\in H^{1}(\O)$ be a weak non-negative superharmonic function. Then, there exists a positive constant $C>0$, independent of $u$, such that, for every   $q\in[1,\tfrac{N}{N-1})$,
\begin{equation*}
		u(x)\geq C  \|u\|_{L^{q}(\O)} \dist(x,\partial\O)\quad \hbox{for all}\;\; x\in\O.
\end{equation*}
\end{theorem}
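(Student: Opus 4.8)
The plan is to read off the stated pointwise lower bound directly from the up-to-the-boundary weak Harnack inequality \eqref{1.9}, which already carries all the analytic weight. Recall that \eqref{1.9} provides, for each $q\in[1,\tfrac{N}{N-1})$, a constant $C_{0}=C_{0}(N,q)>0$ \emph{independent of} $u$ such that every weak non-negative superharmonic $u\in H^{1}(\Omega)$ satisfies
$$
   \|u\|_{L^{q}(\Omega)}\leq C_{0}\,\inf_{\Omega}\frac{u}{d},\qquad d(x)=\dist(x,\partial\Omega),
$$
where $u$ is understood through its canonical lower semicontinuous representative, so that the infimum is meaningful (and, by \eqref{1.9} itself, strictly positive whenever $u\not\equiv 0$).

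First I would fix $q\in[1,\tfrac{N}{N-1})$ and such a function $u$. For an arbitrary $x\in\Omega$ one has $\dist(x,\partial\Omega)>0$, and from the definition of the infimum together with \eqref{1.9},
$$
   \frac{u(x)}{\dist(x,\partial\Omega)}\;\geq\;\inf_{\Omega}\frac{u}{d}\;\geq\;\frac{1}{C_{0}}\,\|u\|_{L^{q}(\Omega)}.
$$
Multiplying both sides by $\dist(x,\partial\Omega)>0$ gives $u(x)\geq \tfrac{1}{C_{0}}\|u\|_{L^{q}(\Omega)}\dist(x,\partial\Omega)$, and since $x\in\Omega$ was arbitrary the theorem follows with $C:=1/C_{0}$; by \eqref{1.9} this constant is independent of $u$, which is precisely the uniformity asserted.

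In this route there is essentially no hard step once \eqref{1.9} is granted: Theorem \ref{SK} is little more than a reformulation of that inequality. The only point deserving a word of care is the pointwise interpretation of the left-hand side — one needs the bound to hold at \emph{every} $x\in\Omega$ for the appropriate representative of $u$, which is exactly what \eqref{1.9} encodes and which is automatic for the $W^{2,p}$ solutions to which the estimate will later be applied. If instead one wanted to avoid invoking \eqref{1.9} as a black box, the natural alternative would be to combine the classical interior estimate \eqref{1.8} on compact subdomains $\mathcal{D}$ with $\bar{\mathcal{D}}\subset\Omega$ with a Hopf–Walter type boundary barrier near $\partial\Omega$ (in the spirit of \eqref{1.7}) and to glue the two bounds by a partition of unity subordinate to a tubular neighbourhood of $\partial\Omega$; there the genuinely delicate point would be making the boundary barrier constant uniform along all of $\partial\Omega$, which is where the $\mathcal{C}^{2}$ regularity of $\Omega$ and the optimality of the exponent $\tfrac{N}{N-1}$ enter.
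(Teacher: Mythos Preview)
Your proposal is correct and matches the paper's treatment: the paper does not give a separate proof of Theorem~\ref{SK} but simply states that ``from \eqref{1.9}, Sirakov \cite{SV} infers the following result,'' and your derivation is exactly the immediate rearrangement of \eqref{1.9} that this sentence alludes to. The extra remarks on representatives and on an alternative interior/boundary gluing are reasonable but go beyond what the paper records.
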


By combining Theorem \ref{SK} with some sharp refinements of the blowing-up arguments of
Gidas and Spruck \cite{GS}, we will infer the first main result of this paper, which can be stated as follows.
It establishes the blow-up rates of the explosive positive solutions of \eqref{1.1}.

\begin{theorem}
	\label{BUS}
Suppose that $\O_+\neq \emptyset$, $\bar\O_+\subset \O$, $\O_{-}\neq \emptyset$, $1\leq q<\tfrac{N}{N-1}$, and $\{(\l_{n},u_{n})\}_{n\in \N}\subset \R_{+}\times W^{2,p}(\O)$ is  a sequence of positive solutions of \eqref{1.1}, with $\{\l_{n}\}_{n\in\N}$
bounded, such that
$$
  \lim_{n\to+\infty}\|u_{n}\|_{L^{\infty}(\O_{+})}= +\infty.
$$
Then, there exist $C= C(N,q)>0$ and $n_0\in \N$, such that, for every $n\geq n_0$,
\begin{equation}
		\label{1.10}
		u_{n}(x) \geq C \|u_{n}\|_{L^{\infty}(\O_{+})}^{1+\frac{(1-r)N}{2q}} \dist(x,\partial\O_{+}) \quad
\hbox{for all}\;\; x\in\O_{+}.
\end{equation}
\end{theorem}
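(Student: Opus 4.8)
The plan is to combine a Gidas--Spruck rescaling centered at the maximum point of $u_n$ with the boundary weak Harnack inequality of Theorem~\ref{SK}, the latter applied on the $\mc{C}^2$ subdomain $\O_+$ rather than on $\O$ itself. Write $M_n:=\|u_n\|_{L^\infty(\O_+)}$; since $\bar\O_+$ is compact we may fix $x_n\in\bar\O_+$ with $u_n(x_n)=M_n$, and $M_n\to+\infty$ by hypothesis. Put $\mu_n:=M_n^{-(r-1)/2}\to 0$. As $\dist(\bar\O_+,\p\O)>0$, for $n$ large the ball $B_{\mu_n}(x_n)$ lies in $\O$, and there I would introduce $v_n(y):=M_n^{-1}u_n(x_n+\mu_n y)$, $|y|<1$, which satisfies $0\le v_n\le 1$, $v_n(0)=1$ and, a.e. in $B_1(0)$,
\[
  -\di\!\big(A_n(y)\nabla v_n\big)=\mu_n^2\l_n\,v_n+a(x_n+\mu_n y)\,v_n^{\,r},\qquad A_n(y):=A(x_n+\mu_n y).
\]
Here $A_n$ is uniformly elliptic with $\|A_n\|_{W^{1,\infty}}$ bounded uniformly in $n$ (as $A\in W^{1,\infty}(\O)$ and $\mu_n\le 1$), and the right-hand side is bounded in $L^\infty(B_1(0))$ uniformly in $n$ (as $0\le v_n\le 1$, $\{\l_n\}$ is bounded, $\mu_n\to 0$, $a\in\mc{C}(\bar\O)$).

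The crux is a \emph{uniform} local lower bound near $x_n$. By the standard interior $L^p$-estimates for divergence-form elliptic equations, $\{v_n\}$ is bounded in $W^{2,p}(B_{1/2}(0))\hookrightarrow\mc{C}^1(\bar B_{1/2}(0))$ uniformly in $n$; hence $\|\nabla v_n\|_{L^\infty(B_{1/2}(0))}\le L$ for some $L>0$ independent of $n$, and $v_n(0)=1$ forces $v_n\ge\tfrac12$ on $\bar B_{r_0}(0)$ with $r_0:=\min\{1/(2L),\tfrac12\}$, for all $n$ large. Undoing the scaling, $u_n\ge\tfrac12 M_n$ on $\bar B_{r_0\mu_n}(x_n)$. (This uses only the gradient estimate, not a Liouville classification, so no restriction on $r$ is needed.) Since $x_n$ may approach $\p\O_+$, I would next use that $\O_+$ is $\mc{C}^2$ with $\bar\O_+\subset\O$, hence $\p\O_+$ is compact and satisfies a uniform interior ball condition of some radius $\rho_0>0$: a short geometric argument (if $\dist(x_n,\p\O_+)\ge r_0\mu_n/2$ keep $B_{r_0\mu_n/2}(x_n)$; otherwise slide from the nearest boundary point along an interior tangent ball of radius $\rho_0$) yields, for $n$ large, a ball $B_n\subset\O_+$ of radius at least $r_0\mu_n/4$ on which $u_n\ge\tfrac12 M_n$. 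Therefore
\[
  \|u_n\|_{L^q(\O_+)}^q\ \ge\ \int_{B_n}u_n^q\ \ge\ \Big(\tfrac12 M_n\Big)^q|B_n|\ \ge\ c\,M_n^{\,q}\mu_n^{\,N}\ =\ c\,M_n^{\,q-\frac{(r-1)N}{2}},
\]
so $\|u_n\|_{L^q(\O_+)}\ge c^{1/q}M_n^{\,1+\frac{(1-r)N}{2q}}$ for all $n$ large.

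To conclude I would apply Theorem~\ref{SK}. In $\O_+$ one has $\mathscr{L}u_n=\l_n u_n+a(x)u_n^{\,r}\ge 0$, so $u_n\in H^1(\O_+)$ is a non-negative $\mathscr{L}$-superharmonic function on the $\mc{C}^2$ domain $\O_+$; Theorem~\ref{SK} (in its form for the operator $\mathscr{L}$, cf.~\cite{SV}), with $\O$ replaced by $\O_+$, gives $C_0=C_0(N,q)>0$ with $u_n(x)\ge C_0\|u_n\|_{L^q(\O_+)}\dist(x,\p\O_+)$ for all $x\in\O_+$. Combining this with the previous bound yields \eqref{1.10} with $C:=C_0\,c^{1/q}=C(N,q)$ and $n_0$ chosen large enough that all the ``for $n$ large'' restrictions above hold simultaneously.

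The main obstacle, as I see it, is matching the rescaling to the region where Theorem~\ref{SK} is valid: the maximum of $u_n$ over $\bar\O_+$ need not be attained deep inside $\O_+$, whereas $u_n$ is $\mathscr{L}$-superharmonic (so Theorem~\ref{SK} applies) only on $\O_+$ and not across $\p\O_+$; bridging this is exactly what the $\mc{C}^2$-regularity of $\O_+$, through its uniform interior ball condition, is for. A secondary, more routine point is extracting the local bound $u_n\gtrsim M_n$ on a ball of radius $\sim\mu_n$ uniformly in $n$ without invoking a Liouville theorem, which the uniform $\mc{C}^1$-estimate for the $v_n$ provides.
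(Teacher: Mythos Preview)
Your proof is correct and shares the paper's overall architecture---a Gidas--Spruck rescaling around the maximum point $x_n\in\bar\O_+$ yields $u_n\gtrsim M_n$ on a ball of radius $\sim M_n^{(1-r)/2}$, which feeds a lower bound on $\|u_n\|_{L^q(\O_+)}$ into Theorem~\ref{SK} applied on the $\mc{C}^2$ subdomain $\O_+$---but the two technical steps are handled differently. First, where the paper extracts convergent subsequences, invokes Theorem~\ref{ThFr} to place the limit $x_0$ on $\partial\O_+$ (so $a(x_0)=0$), and then uses the classical Liouville theorem to obtain $v_n\to 1$ in $\mc{C}^1(B_R)$, you obtain the local lower bound directly from a uniform $\mc{C}^1$ gradient estimate on the rescaled functions combined with $v_n(0)=1$; this is more elementary, avoids subsequences (so the estimate comes out for all large $n$ rather than along a subsequence), and sidesteps the implicit restriction $r<\tfrac{N+2}{N-2}$ carried by Theorem~\ref{ThFr}. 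Second, to ensure that the small ball contributes volume $\gtrsim\mu_n^N$ inside $\O_+$, the paper appeals to the measure-density condition of Theorem~\ref{Th2.3}, whereas you use the uniform interior ball condition of $\partial\O_+$ together with a short case distinction; both routes give $\mathbf{m}(B_n)\gtrsim\mu_n^N$, the paper's being a one-liner at the price of an external reference, yours being self-contained. One minor point you and the paper share identically: the assertion $0\le v_n\le 1$ on the rescaled ball is, strictly speaking, only guaranteed where the ball meets $\O_+$, since $M_n=\|u_n\|_{L^\infty(\O_+)}$; the paper makes the same claim without further comment.
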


The estimate \eqref{1.10} is of great interest on its own. It might be useful even to get some parabolic counterparts in the vein outlined by Quittner and Simondon in \cite{QS}.
\par
In the context of superlinear indefinite problems, where $\O_+\neq \emptyset$ and $\O_{-}\neq \emptyset$, no effective method for obtaining a priori bounds had been known, at least,  until Berestycki, Capuzzo-Docetta and Nirenberg \cite{BCN,BCN2} provided a sufficient condition in terms of the behaviour of $a(x)$ near $\O_{0}$. Precisely, imposing that
\begin{equation}
	\label{1.11}
a\in\mc{C}^{2}(\bar\O), \quad \nabla a(x)\neq 0 \;\; \hbox{for all}\;\;  x\in\bar\O_+\cap \bar \O_-\subset \O,
\end{equation}
the main result of \cite{BCN} guarantees the existence of a priori bounds for the classical positive solutions of \eqref{1.1} in the special case when  $\mathscr{L}=-\Delta$, i.e., for $A(x)\equiv I_{\R^N}$,  under Dirichlet boundary conditions,  provided that
$$
   1<r<\frac{N+2}{N-1}.
$$
Later, Chen and Li \cite{CL,CL2} proved, by a novel moving plane argument, that actually \eqref{1.11} entails
the existence of a priori bounds for the positive solutions of \eqref{1.1} in a neighborhood of $\O_{0}$ for every $r > 1$.
\par
Under the general assumptions of this paper, adapting the Liouville theorem of \cite{BCN} and using some
comparison arguments, Amann and L\'{o}pez-G\'{o}mez \cite{ALG} improved the results of \cite{BCN} in a different direction. Their main result reads as follows in our present setting.

\begin{theorem}[\textbf{Amann and L\'{o}pez-G\'{o}mez, 1998}]
	Suppose  $\O_+\neq \emptyset$, $\bar\O_+\subset\O$, $\O_{-}\neq \emptyset$, and  there exist a continuous function  $\alpha:\bar{\O}_{+}\to\R_{+}$, with $\a(x)>0$ for all $x\in \p\O_+$, and a constant $\gamma > 0$ such that
\begin{equation}
\label{1.12}
a(x)=\alpha(x)[\dist(x,\partial\O_{+})]^{\gamma} \quad \hbox{for all}\;\; x\in\O_{+}.
\end{equation}
Suppose, in addition, that
\begin{equation}
\label{1.13}
   r<\frac{N+1+\gamma}{N-1} \quad  \text{ and } \quad r<\frac{N+2}{N-2} \quad \text{ if } N\geq 3.
\end{equation}
	Then, if $\mathscr{S}\subset \R_{+}\times W^{2,p}(\O)$ is a set of positive solutions of \eqref{1.1} such that $\Lambda_{\mathscr{S}}$ is bounded, there exists a constant $C>0$ such that
$$
\sup_{(\l,u)\in \mathscr{S}}\|u\|_{L^{\infty}(\O)}\leq C.
$$
\end{theorem}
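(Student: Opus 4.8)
The plan is to argue by contradiction through a Gidas--Spruck type rescaling argument in which the degenerate weight of \eqref{1.12} is built into the scaling, so that the critical exponent $\tfrac{N+1+\gamma}{N-1}$ of \eqref{1.13} emerges as the Liouville threshold of a weighted limiting problem. Suppose, to the contrary, that for some set $\mathscr{S}$ of positive solutions of \eqref{1.1} with $\Lambda_{\mathscr{S}}$ bounded one has $\sup_{(\l,u)\in\mathscr{S}}\|u\|_{L^{\infty}(\O)}=+\infty$, and pick $(\l_n,u_n)\in\mathscr{S}$ with $M_n:=\|u_n\|_{L^{\infty}(\O)}\to+\infty$ and, along a subsequence, $\l_n\to\l_\infty\geq0$. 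The first step localizes the blow-up inside $\bar\O_+$: on $\O\setminus\bar\O_+$ one has $a\leq0$, hence $\mathscr{L}u_n-\l_n u_n=a(x)u_n^r\leq0$ there, so $u_n$ is a subsolution of $\mathscr{L}-\l_n$ under $\mathscr{B}$ (recall $\beta\geq0$); the comparison result of Amann and L\'opez-G\'omez \cite{ALG} then bounds $\|u_n\|_{L^{\infty}(\O\setminus\O_+)}$ in terms of $\|u_n\|_{L^{\infty}(\partial\O_+)}$ and $\l_n$, so $\|u_n\|_{L^{\infty}(\bar\O_+)}\to+\infty$ and, after relabelling, $M_n=\|u_n\|_{L^{\infty}(\bar\O_+)}=u_n(x_n)$ for some $x_n\in\bar\O_+$. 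As $\bar\O_+\subset\O$, $\dist(x_n,\partial\O)\geq\dist(\bar\O_+,\partial\O)>0$, so $\mathscr{B}$ plays no further role.

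Along a further subsequence $x_n\to x_\infty\in\bar\O_+$ and $\delta_n:=\dist(x_n,\partial\O_+)$ converges, and we distinguish two regimes. If $\liminf_n\delta_n>0$, then $x_\infty\in\O_+$ and $a(x_\infty)>0$; after freezing $A$ at $x_\infty$ and a linear change of variables reducing its principal part to $-\Delta$, set $v_n(y):=M_n^{-1}u_n(x_n+\mu_n y)$ with $\mu_n:=M_n^{-(r-1)/2}\to0$. On an expanding sequence of balls $v_n$ satisfies an equation converging to $-\Delta v=a(x_\infty)v^r$ in $\R^N$ --- since $\l_n\mu_n^2 v_n\to0$, the drift terms in $-\di(A\nabla u_n)=-A_{ij}\p_{ij}u_n-(\p_iA_{ij})\p_j u_n$ carry an extra factor $\mu_n$, and $a(x_n+\mu_n y)\to a(x_\infty)$ --- and a dilation in $y$ normalizes the constant. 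Interior $W^{2,p}_{\loc}$ estimates together with $0\leq v_n\leq1$ yield $v_n\to v$ in $\mc{C}^{1}_{\loc}(\R^N)$, with $v$ a bounded non-negative solution of $-\Delta v=v^r$ in $\R^N$ and $v(0)=1$. This contradicts the Gidas--Spruck Liouville theorem \cite{GS}, which holds for all $r>1$ when $N\leq2$ and, for $N\geq3$, precisely when $r<\tfrac{N+2}{N-2}$ --- the second condition in \eqref{1.13}.

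The delicate regime is $\delta_n\to0$, where the blow-up collapses onto the interface $\partial\O_+$, an interior hypersurface of $\O$; here \eqref{1.12} is essential. In local coordinates flattening $\partial\O_+$, with $s$ the normal coordinate, the equation reads $-\Delta u_n\approx\l_n u_n+\alpha(x)s^\gamma u_n^r$ with $\alpha(x_\infty)>0$, and the weight $s^\gamma$ must enter the scaling. With $\rho_n:=M_n^{-(r-1)/(2+\gamma)}$ (so that $\rho_n^{2+\gamma}M_n^{r-1}=1$) and a base point $\bar x_n\in\partial\O_+$ realizing $\dist(x_n,\partial\O_+)$, the rescaled nonlinear coefficient is $\alpha(\bar x_n+\rho_n y)\bigl(\dist(\bar x_n+\rho_n y,\partial\O_+)/\rho_n\bigr)^\gamma\to\alpha(x_\infty)s^\gamma$ locally uniformly (as $\partial\O_+$ is of class $\mc{C}^2$), while the rescaled domains exhaust $\{s>0\}$. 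According as $\delta_n/\rho_n$ stays bounded or tends to $+\infty$, the functions $v_n(y):=M_n^{-1}u_n(\bar x_n+\rho_n y)$ --- or, in the latter case, those obtained by rescaling about $x_n$ at the intermediate scale $M_n^{-(r-1)/2}\delta_n^{-\gamma/2}$, along which the weight becomes asymptotically constant --- converge in $\mc{C}^{1}_{\loc}$ to a bounded non-negative, not identically zero, solution either of $-\Delta v=v^r$ in $\R^N$ (again excluded by $r<\tfrac{N+2}{N-2}$) or, after normalizing $\alpha(x_\infty)$, of the weighted half-space problem $-\Delta v=s^\gamma v^r$ in $\{s>0\}$. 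The argument closes with the non-existence theorem for this last problem --- the adaptation of the Liouville theorem of Berestycki--Capuzzo-Dolcetta--Nirenberg \cite{BCN}, valid exactly when $r<\tfrac{N+1+\gamma}{N-1}$, the first condition in \eqref{1.13}. This weighted Liouville theorem is the main obstacle: the weight simultaneously pins the homogeneity exponent $\tfrac{2+\gamma}{r-1}$ of the rescaling and moves the admissible range from the Brezis--Turner value $\tfrac{N+1}{N-1}$ up to $\tfrac{N+1+\gamma}{N-1}$, so that the threshold is genuinely sharp. The remaining bookkeeping --- ensuring one of the two limit problems is always reached whatever the relative rates of $M_n$ and $\delta_n$, and that the lower-order and variable-coefficient terms vanish in the limit --- is routine. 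This is the desired contradiction.
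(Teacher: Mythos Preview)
The paper does not supply a proof of this theorem: it is quoted as a known result from \cite{ALG}, so there is no ``paper's own proof'' to compare your attempt against. That said, your outline is the standard Gidas--Spruck rescaling argument adapted to the degenerate weight, and it is essentially the route taken in the original reference \cite{ALG}: one first localizes the blow-up to $\bar\O_+$ via comparison (this is exactly the content of Proposition~\ref{ThPM}, which the present paper also quotes from \cite{ALG}), then performs a blow-up with the weighted scale $\rho_n=M_n^{-(r-1)/(2+\gamma)}$ dictated by \eqref{1.12}, and closes with the Liouville theorem of \cite{BCN} for the limit problem.

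One refinement worth recording: since $\bar\O_+\subset\O$, in the boundary regime the rescaled domains exhaust all of $\R^N$, not only the half-space $\{s>0\}$; on $\{s<0\}$ the sign of $a$ yields $-\Delta v\leq 0$ in the limit, and the Liouville theorem actually invoked in \cite{ALG} (adapted from \cite{BCN}) is for bounded non-negative solutions of $-\Delta v=s_+^\gamma v^r$ on the whole of $\R^N$, with the first exponent in \eqref{1.13} as its threshold. Apart from this wording issue --- and the usual caveat that the ``routine bookkeeping'' of matching the several possible blow-up rates to the two Liouville alternatives does require care --- your sketch is correct and in line with the original source.
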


Note that \eqref{1.13} reduces to the Gidas--Spruck condition $r<\frac{N+2}{N-2}$ if $N\geq 3$
 provided that $\g\geq \frac{2N}{N-2}$. Some time later, Du and Li \cite{DL} were also able to
 get the a priori bounds up to $r<\frac{N+2}{N-2}$ if $N\geq 3$ by imposing the additional conditions that
$\mathscr{L}=-\D$ and that
 \begin{equation}
	\label{1.14}
	a(x)=\alpha_{1}(x)[\dist(x,\partial\O_{+})]^{\gamma_{1}} \quad \hbox{for all}\;\; x\in\O_{-},
\end{equation}
for a continuous function $\a_1:\bar\O_-\to \R_+$ such that $\a_1(x)>0$ if $x\in\p\O_-$, and
a constant $\g_1>0$.

A question that arises rather naturally is to ascertain whether or not these decaying conditions on $a(x)$ ---\eqref{1.12} and \eqref{1.14}--- are really necessary to guarantee the existence of a priori bounds. The main result of this paper shows that the decaying conditions \eqref{1.12} and \eqref{1.14} are unnecessary
to have a priori bounds as soon as $r<p_{BT}=\frac{N+1}{N-1}$ if $N\geq 3$. It reads as follows.

\begin{theorem}
	\label{APB}
	Suppose that $\O_{+}\neq \emptyset$, $\bar\O_+\subset\O$, $\O_{-}\neq \emptyset$, and
$$
r<p_{BT}:=\frac{N+1}{N-1} \quad \hbox{if} \;\; N\geq 3.
$$
	Then, for every set of positive solutions of \eqref{1.1}, $\mathscr{S}\subset \R_{+}\times W^{2,p}(\O)$,
with $\Lambda_{\mathscr{S}}$ bounded, there exists a positive constant $C>0$ such that
$$
  \sup_{(\l,u)\in \mathscr{S}}\|u\|_{L^{\infty}(\O)}\leq C.
$$
\end{theorem}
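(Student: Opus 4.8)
The plan is to argue by contradiction, combining the blow-up lower bound of Theorem~\ref{BUS} with the weak Harnack upper bound of Theorem~\ref{SI} restricted to a neighbourhood of $\bar\O_+$. Suppose the conclusion fails; then there is a sequence $\{(\l_n,u_n)\}_{n\in\N}\subset\mathscr{S}$ of positive solutions with $\{\l_n\}$ bounded and $\|u_n\|_{L^\infty(\O)}\to+\infty$. The first step is to locate where the blow-up happens. If $\|u_n\|_{L^\infty(\O)}$ stays comparable to $\|u_n\|_{L^\infty(\O_+)}$ --- more precisely, if $\|u_n\|_{L^\infty(\O_+)}\to+\infty$ --- we are in the regime of Theorem~\ref{BUS}; otherwise the blow-up concentrates away from $\bar\O_+$, and on that region $a(x)\le 0$, so $u_n$ is a subsolution of a linear problem $\mathscr{L}u=\l_n u$ with bounded $\l_n$ and controlled boundary data coming from the (to be established) bound near $\bar\O_+$, and a standard linear a priori estimate will then contradict the blow-up. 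So the core case is $\|u_n\|_{L^\infty(\O_+)}\to+\infty$.

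In that core case I would first apply Theorem~\ref{SI}. Since $\bar\O_+\subset\O$, choose a smooth subdomain $\mc{D}$ with $\bar\O_+\subset\mc{D}$ and $\bar{\mc{D}}\subset\O$; on $\mc{D}$ the potential $a$ need not be sign-definite, but one can enlarge $a$ to a nonnegative $\tilde a\in\mc{C}(\bar{\mc{D}})$ with $\tilde a\ge a$ (e.g.\ $\tilde a=a^+$) and observe that $u_n$ is a positive subsolution of $\mathscr{L}u=\l_n u+\tilde a(x)u^r$ in $\mc{D}$; Sirakov's argument in \cite{SV0} applies to subsolutions, giving $\|u_n\|_{L^\infty(\mc{D})}\le C$ for a constant independent of $n$, using $r<\frac{N+1}{N-1}$ (and, since $\bar{\mc{D}}\subset\O$, one even has the weaker requirement $r<\frac{N}{N-2}$ available, but $p_{BT}$ is what is invoked in the statement). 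Alternatively, and perhaps more in line with the paper's philosophy, one combines the weak Harnack inequality of Theorem~\ref{SK} applied on $\mc{D}$ with the equation to bootstrap: testing against $d(x)=\dist(x,\p\mc{D})$ and using $r<p_{BT}$ closes an integral inequality bounding $\|u_n\|_{L^q(\mc{D})}$, which then feeds into elliptic regularity to bound $\|u_n\|_{L^\infty(\bar\O_+)}$. Either way the output is: $\|u_n\|_{L^\infty(\O_+)}$ is bounded, contradicting the assumption $\|u_n\|_{L^\infty(\O_+)}\to+\infty$ directly.

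Thus the blow-up estimate \eqref{1.10} of Theorem~\ref{BUS} may not even be needed to kill the core case --- but I would keep it in reserve for the bookkeeping in the ``blow-up away from $\bar\O_+$'' case: once $\|u_n\|_{L^\infty(\O_+)}$ is shown bounded by the Sirakov/weak-Harnack step, one has uniform control of $u_n$ on a neighbourhood of $\bar\O_+$, hence on the boundary of the region $\O\setminus\overline{\O_+}$ together with $\p\O\cap(\O\setminus\overline{\O_+})$ where $\mathscr{B}u_n=0$; on $\O\setminus\overline{\O_+}$ we have $a\le0$, so $\mathscr{L}u_n-\l_n u_n=a(x)u_n^r\le0$, i.e.\ $u_n$ is a subsolution of the linear operator $\mathscr{L}-\l_n$, and since $\{\l_n\}$ is bounded below the principal eigenvalue away from $\O_+$ need not be positive, but one can still invoke the comparison result of Amann and L\'opez-G\'omez \cite{ALG} cited in the abstract, or a direct maximum-principle/linear-estimate argument, to bound $\sup_{\O\setminus\overline{\O_+}}u_n$ by a constant times $\sup_{\p(\O\setminus\overline{\O_+})}u_n$ plus lower-order terms. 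Combining the two regions yields a uniform bound on $\|u_n\|_{L^\infty(\O)}$, the desired contradiction.

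The main obstacle I anticipate is the passage from the interior/boundary sup-bound near $\bar\O_+$ to a global sup-bound across $\O_-$ and $\O_0$ when the spectral parameter $\l_n$ is not small: one must ensure that the linear operator $\mathscr{L}-\l_n$ still admits the comparison/maximum principle on the subregion $\O\setminus\overline{\O_+}$ (with mixed boundary conditions $\mathscr{B}$ on $\p\O$ and Dirichlet-type control on the interior interface $\p\O_+$), which is exactly where condition \eqref{1.3} on the components of $\Gamma_1$ and the Amann--L\'opez-G\'omez comparison principle enter; handling the case where $\l_n$ crosses an eigenvalue of the sub-problem, if it can happen within the bounded range $\Lambda_{\mathscr{S}}$, requires care and is the delicate point of the argument. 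The blow-up estimate \eqref{1.10}, interpreted contrapositively, is the clean way to phrase the contradiction in the core case: if $u_n$ were unbounded on $\O_+$ then \eqref{1.10} forces $u_n$ to be large on all of $\O_+$ (the exponent $1+\frac{(1-r)N}{2q}$ is positive precisely when $r<1+\frac{2q}{N}$, and letting $q\uparrow\frac{N}{N-1}$ this is $r<\frac{N+1}{N-1}=p_{BT}$), which contradicts the uniform $L^q(\mc{D})$ bound obtained from the weak Harnack inequality; reconciling the exponent arithmetic so that the threshold comes out to be exactly $p_{BT}$ is the one calculation that must be done carefully.
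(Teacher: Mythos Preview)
Your proposal has a genuine gap at the step where you invoke Theorem~\ref{SI} on a neighbourhood $\mc{D}\supset\bar\O_+$. Theorem~\ref{SI} is stated under the hypothesis $\O_-=\emptyset$, i.e.\ $a\ge 0$, and any open $\mc{D}$ containing $\bar\O_+$ necessarily meets the region $\{a\le 0\}$ (indeed $a=0$ on $\p\O_+$ and $a\le 0$ just outside). Your proposed fix---replace $a$ by $a^+$ and observe that $u_n$ becomes a \emph{sub}solution of $\mathscr{L}u=\l_n u+a^+ u^r$---does not rescue the argument: Sirakov's method rests on the weak Harnack \emph{lower} bound (Theorem~\ref{SK}), which requires $\mathscr{L}u_n\ge 0$, i.e.\ \emph{super}harmonicity, and this fails wherever $a<0$ and $u_n$ is large. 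The same objection blocks your ``bootstrap'' alternative: the weak Harnack inequality yields $u\ge C\|u\|_{L^q}\,d$, a lower bound on $u$, not an upper bound on $\|u\|_{L^q}$; to close an integral inequality for $\|u\|_{L^q}$ one needs a boundary condition (as in \eqref{1.5}) on $\p\mc{D}$, which is absent here. In short, you cannot get a direct $L^\infty(\O_+)$ bound from Sirakov's result in the indefinite setting---this is precisely why Theorem~\ref{APB} is new.

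The paper's proof follows a different route that makes essential use of Theorem~\ref{BUS}. The reduction from $\O$ to $\O_+$ is handled cleanly by Proposition~\ref{ThPM} (your informal version of this is fine). For the core case $\|u_n\|_{L^\infty(\O_+)}\to\infty$, the blow-up estimate \eqref{1.10} with the exponent arithmetic you noted (positive when $r<p_{BT}$) shows that $\mathbf{m}(\{x\in\O_+:u_n(x)\le L\})\to 0$ for every fixed $L$ (Lemma~\ref{LC}). The contradiction is then obtained not from an $L^q$ upper bound, but by testing the equation on $\O_+$ against the principal eigenfunction $\varphi$ of $\mathscr{L}\varphi=\sigma_1(a)\,a\varphi$ with $\varphi=0$ on $\p\O_+$: integration by parts and the Hopf sign $\partial_\nu\varphi<0$ give a strict inequality that, after splitting $\O_+=[u_n>L]\cup[u_n\le L]$ and passing to the limit via Lemma~\ref{LC}, forces $\int_{\O_+}a\varphi\le 0$, which is impossible. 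This eigenfunction-testing step is the missing idea in your outline.
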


Therefore, by simply imposing $r<p_{BT}$ if $N\geq 3$, one can get universal a priori bounds
regardless the decay rates of $a(x)$ on $\p\O_+\cup \p\O_-$ and the nature of the boundary conditions.
We stress again that, according to Souplet \cite{S}, whenever $p > p_{BT}$, there exists $a \gneq 0$ and $a \in L^{\infty}(\O)$ such that \eqref{1.4} admits a positive unbounded weak solution. Thus, the weak counterpart of Theorem \ref{APB}, is optimal if valid. Anyway, it remains an open problem to ascertain whether or not one might improve the range
$r<p_{BT}$ up to reach $p_{GS}:=\frac{N+2}{N-2}$ if $N\geq 3$ under the minimal requirements of Theorem \ref{APB}. Our proof of Theorem \ref{APB} proceeds by contradiction from  the blowing-up rates of Theorem \ref{BUS}.
\par
This paper is organized as follows. Section \ref{Se2} collects some preliminary results that will be used in the proofs of the main results of this paper. Section \ref{Se3} proves Theorem \ref{BUS}. Finally, based on these bounds, Section \ref{Se4} delivers the proof of Theorem \ref{APB}.

\section{Preliminaries}\label{Se2}

We begin by delivering a technical lemma necessary to apply the scaling arguments of Gidas--Spruck type.

\begin{lemma}
	\label{L1}
    Let $\O$ be an open  subset of $\R^{N}$ and $\{z_{n}\}_{n\in\mathbb{N}}\subset \mathbb{R}^{N}$, $\{\mu_{n}\}_{n\in\mathbb{N}}\subset (0,+\infty)$ be two sequences satisfying
	\begin{equation}
		\label{L101}
		\lim_{n\to+\infty} z_{n}=z_{0}\in -\O,  \quad
		\lim_{n\to+\infty}\mu_{n}=+\infty.
	\end{equation}
	Then, for every $R>0$, there exists $n_0=n_0(R)\in\mathbb{N}$ such that
	\begin{equation}
		\label{L102}
		B_R(0)\subset \mu_{n}\left( z_{n}+\O\right) \quad \hbox{for all}\;\; n\geq n_0.
	\end{equation}
Therefore,
	\begin{equation*}
		\bigcup_{n\in\mathbb{N}}\mu_{n}\left( z_{n}+\O\right)=\mathbb{R}^{N}.
	\end{equation*}
\end{lemma}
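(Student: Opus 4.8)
The plan is to reduce everything to an elementary inclusion between Euclidean balls, exploiting only the openness of $\O$ together with the two limits in \eqref{L101}. First I would use that $z_0\in-\O$, i.e. $-z_0\in\O$: since $\O$ is open there exists a radius $\rho>0$, \emph{independent of $n$ and of $R$}, with $B_\rho(-z_0)\subset\O$. Translating by $z_0$, this is the same as
\[
B_\rho(0)=z_0+B_\rho(-z_0)\subset z_0+\O .
\]
This is the only step where openness is used, and it produces the fixed ball $B_\rho(0)$ that must survive the later translation and dilation.

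Next I would transport this inclusion along the sequence $\{z_n\}$. Writing $z_n+\O=(z_n-z_0)+(z_0+\O)$ and invoking the previous display,
\[
z_n+\O\ \supset\ (z_n-z_0)+B_\rho(0)=B_\rho(z_n-z_0).
\]
Now fix $R>0$. Since $\mu_n>0$, the inclusion $B_R(0)\subset\mu_n(z_n+\O)$ is equivalent to $B_{R/\mu_n}(0)\subset z_n+\O$, and by the display above it is enough to have $B_{R/\mu_n}(0)\subset B_\rho(z_n-z_0)$. By the triangle inequality this holds as soon as $|z_n-z_0|+R/\mu_n\le\rho$.

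To close the argument I would appeal to \eqref{L101}: $z_n\to z_0$ gives $|z_n-z_0|\to0$, and $\mu_n\to+\infty$ gives $R/\mu_n\to0$, so there is $n_0=n_0(R)\in\N$ with $|z_n-z_0|+R/\mu_n<\rho$ for all $n\ge n_0$, which is exactly \eqref{L102}. Finally, for the last assertion, given any $x\in\R^N$ I would apply what has just been proved with $R:=|x|+1$: for every $n\ge n_0(|x|+1)$ one has $x\in B_R(0)\subset\mu_n(z_n+\O)$, hence $x\in\bigcup_{n\in\N}\mu_n(z_n+\O)$, and since $x$ is arbitrary the union equals $\R^N$. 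The statement is entirely elementary; the only point that needs a little care is the bookkeeping of the translation by $z_n-z_0$ and the dilation by $\mu_n$, so that the fixed ball $B_\rho(0)$ coming from openness is not lost when passing from $z_0+\O$ to $z_n+\O$ and then to $\mu_n(z_n+\O)$.
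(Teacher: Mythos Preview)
Your proof is correct and follows essentially the same approach as the paper: both use openness to fix a ball of radius $\rho$ (the paper's $\varepsilon$) around the limit point, then invoke the two limits in \eqref{L101} together with the triangle inequality to absorb the translation $z_n-z_0$ and the scaled radius $R/\mu_n$. The only cosmetic difference is that the paper checks the pointwise membership $y\in\mu_n(z_n+\O)\Leftrightarrow z_n-\mu_n^{-1}y\in-\O$ directly, whereas you phrase it as the equivalent ball inclusion $B_{R/\mu_n}(0)\subset B_\rho(z_n-z_0)\subset z_n+\O$; the underlying arithmetic is identical.
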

\begin{proof}
Since $z_{0}\in -\O$ and $-\O$ is open, there exists $\varepsilon>0$ such that $B_\varepsilon(z_{0})\subset -\O$. Pick $R>0$ and $y\in\mathbb{R}^{N}$ satisfying $|y|< R$, and observe that
\begin{equation}
		\label{L11}
		y\in \mu_{n}\left( z_{n}+\O\right) \quad \Longleftrightarrow \quad z_{n}-\mu_{n}^{-1}y\in -\O.
\end{equation}
	Choose $n_0\in\mathbb{N}$ such that
	\begin{equation}
		R \, \mu_{n}^{-1}<\frac{\varepsilon}{2}, \quad |z_{n}-z_{0}|<\frac{\varepsilon}{2}, \quad \hbox{for
all}\;\; n\geq n_0.
	\end{equation}
Then, for every $n\geq n_0$,
	\begin{align*}
		|z_{n}-\mu_{n}^{-1}y-z_{0}|\leq \mu_{n}^{-1} |y| + |z_{n}-z_{0}| <  R \mu_{n}^{-1}+ |z_{n}-z_{0}|<\varepsilon,
	\end{align*}
which implies that
	\begin{equation}
		z_{n}-\mu_{n}^{-1}y\in B_\e(z_{0})\subset -\O
	\end{equation}
for all $n\geq n_0$. Thus,  by \eqref{L11}, $y\in \mu_{n}(z_{n}+\O) $ for all $n\geq n_0$. As $n_0$ depends on $R$ but not on $y$, we can infer \eqref{L102}. This concludes the proof.
\end{proof}

In the proof of Theorem \ref{APB} we will invoke the next result, which is \cite[Th. 4.1]{ALG} and it is  based on the strong maximum principle.

\begin{proposition}
	\label{ThPM}
	If $\mathscr{S}\subset \R_{+}\times W^{2,p}(\Omega)$ is a set of positive solutions of \eqref{1.1}, then
	\begin{equation}
		\sup_{(\l,u)\in\mathscr{S}}\sup_{x\in \O_{+}} u(x) < +\infty \quad \Longrightarrow \quad \sup_{(\l,u)\in\mathscr{S}}\sup_{x\in \O} u(x) < +\infty.
	\end{equation}
	In other words, uniform a priori bounds on $\O_{+}$ imply uniform bounds on $\O$.
\end{proposition}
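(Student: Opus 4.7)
The plan is to propagate the uniform $L^\infty$-bound on the ``good'' region $\bar\Omega_+$ to the complementary region $\Omega^*:=\Omega\setminus\bar\Omega_+$, exploiting the favourable sign of $a(x)u^r$ there. Setting $K:=\sup_{(\lambda,u)\in\mathscr{S}}\|u\|_{L^\infty(\Omega_+)}$, which is finite by assumption, the Sobolev embedding $W^{2,p}(\Omega)\hookrightarrow \mc{C}(\bar\Omega)$ (available since $p>N$) extends the bound to $\bar\Omega_+$ by continuity; in particular $u\le K$ on the internal $\mc{C}^2$ hypersurface $\partial\Omega_+$, which sits strictly inside $\Omega$ thanks to $\bar\Omega_+\subset\Omega$. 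On $\Omega^*$ we have $a(x)\le 0$ and $u\ge 0$, so \eqref{1.1} rearranges as
$$
(\mathscr{L}-\lambda I)\,u \,=\, a(x)\,u^r \,\le\, 0 \quad\text{in }\Omega^*,
$$
making $u$ a non-negative subsolution of a purely linear elliptic operator on $\Omega^*$, with data $u\le K$ on $\partial\Omega_+$ and $\mathscr{B} u=0$ on the remainder $\partial\Omega^*\cap\partial\Omega$; hypothesis \eqref{1.3} guarantees that the mixed boundary operator $\mathscr{B}$ is well defined on this restricted boundary.

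I would then introduce the unique positive solution $\bar v$ of the auxiliary linear BVP
$$
(\mathscr{L}-\lambda I)\,\bar v = 0 \text{ in }\Omega^*, \quad \bar v = K \text{ on }\partial\Omega_+, \quad \mathscr{B}\bar v = 0 \text{ on }\partial\Omega^*\cap\partial\Omega,
$$
apply the standard comparison principle to $\bar v-u$ to conclude $u\le \bar v$ on $\Omega^*$, and invoke elliptic regularity to bound $\|\bar v\|_{L^\infty(\Omega^*)}$ in terms of $K$, $\lambda$ and the geometry of $\Omega^*$. Combined with $u\le K$ on $\bar\Omega_+$, this yields the desired uniform bound on the whole of $\Omega$.

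The main obstacle is that both the solvability of the auxiliary problem and the comparison step require $\lambda$ to lie below the principal eigenvalue $\sigma_1(\mathscr{L};\Omega^*)$ associated with the mixed boundary conditions described above. The decisive point --- and the reason the hypothesis $\bar\Omega_+\subset\Omega$ is sharp here --- is that the purely interior Dirichlet datum on $\partial\Omega_+$ strictly raises the principal eigenvalue above the one on $\Omega$ by domain monotonicity, so that for any $\mathscr{S}$ whose $\l$-projection $\Lambda_\mathscr{S}$ is confined to a compact subinterval (the standard setting in which a priori bounds are asked for) $\sigma_1(\mathscr{L};\Omega^*)$ can be made to dominate $\lambda$ uniformly. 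The remainder of the argument is elliptic bookkeeping: verifying the $\mc{C}^2$ regularity of $\partial\Omega^*$, the compatibility of $\mathscr{B}$ on $\partial\Omega^*\cap\partial\Omega$ through \eqref{1.3}, and continuous dependence of $\bar v$ on the data $(\lambda,K)$, all handled by the linear machinery underlying \cite[Th. 4.1]{ALG} quoted in the statement.
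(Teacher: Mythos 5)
Your reduction to the region $\Omega^{*}:=\Omega\setminus\bar{\Omega}_{+}$, where $a\leq 0$ and $u$ becomes a non-negative subsolution of $\mathscr{L}-\lambda$, is the natural first step, but the decisive step of your argument fails. The comparison with the auxiliary function $\bar v$ (its very existence as a positive solution, and the inequality $u\leq\bar v$) is available exactly when $\lambda<\sigma_{1}[\mathscr{L},\Omega^{*}]$, the principal eigenvalue of the mixed problem with Dirichlet data on $\partial\Omega_{+}$ and $\mathscr{B}$ on $\partial\Omega\cap\partial\Omega^{*}$; by the characterization of the maximum principle (see \cite{LG13}), for $\lambda\geq\sigma_{1}[\mathscr{L},\Omega^{*}]$ the comparison principle for $\mathscr{L}-\lambda$ on $\Omega^{*}$ simply does not hold. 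Your justification that this threshold ``can be made to dominate $\lambda$ uniformly'' by domain monotonicity is not correct: monotonicity only gives $\sigma_{1}[\mathscr{L},\Omega^{*}]>\sigma_{1}[\mathscr{L},\Omega]$, and $\sigma_{1}[\mathscr{L},\Omega^{*}]$ is a fixed finite number determined by $\mathscr{L}$ and the geometry, while the statement places no restriction on the $\lambda$'s in $\mathscr{S}$ (and even in the applications of Section 4 the only hypothesis is that $\Lambda_{\mathscr{S}}$ is bounded, not that it lies below any linear eigenvalue). Hence a bounded family of $\lambda$'s may well contain values above $\sigma_{1}[\mathscr{L},\Omega^{*}]$, and for those your linear comparison argument collapses; there is no parameter to tune.

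The missing idea is that for such $\lambda$ one cannot afford to discard $a(x)u^{r}$ after merely noting its sign: writing $(\mathscr{L}-\lambda)u\leq 0$ throws away the superlinear absorption $a(x)u^{r}=-|a(x)|u^{r}$ in $\Omega_{-}$, which is precisely what keeps $u$ under control when the linear maximum principle on $\Omega^{*}$ is unavailable; any complete proof must exploit this nonlinear structure (together with the strong maximum principle near the set where $a$ vanishes), which is how the quoted result \cite[Th.\ 4.1]{ALG} proceeds. Note also that the paper itself does not reprove the proposition: it is taken verbatim from \cite{ALG}, and is explicitly described there as resting on the strong maximum principle rather than on a comparison with a $\lambda$-dependent linear problem. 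So your proposal, as written, proves the implication only in the restricted range $\lambda<\sigma_{1}[\mathscr{L},\Omega^{*}]$ and has a genuine gap beyond it.
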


The next result, which is \cite[Lem. 4.2]{ALG}, will be used in the proof of Theorem \ref{BUS}.
It establishes that the blow up of the positive solutions of \eqref{1.1} must occur on $\p \O_{+}$.

\begin{theorem}
	\label{ThFr}
	Suppose that $r<\tfrac{N+2}{N-2}$ if $N\geq 3$.
	Let $\{u_{n}\}_{n\in \N}\subset W^{2,p}(\O)$ be a sequence of positive solutions of \eqref{1.1}
	such that
	$$\lim_{n\to+\infty}\|u_{n}\|_{L^{\infty}(\O_{+})}= +\infty,$$
and let  $\{x_{n}\}_{n\in\N}\subset \bar{\O}_{+}$ be such that $u_{n}(x_{n})=\|u_{n}\|_{L^{\infty}(\O_{+})}$ for each $n\in \N$. Then,
	$$\lim_{n\to+\infty}\dist(x_{n},\partial\O_{+})=0.$$
\end{theorem}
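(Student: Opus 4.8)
The plan is to argue by contradiction. Suppose the conclusion fails. Then, after passing to a subsequence (not relabelled), there exists $\delta_0>0$ such that $\dist(x_n,\partial\O_+)\geq \delta_0$ for all $n$, i.e. the maximum points $x_n$ of $u_n$ over $\bar\O_+$ stay a definite distance away from $\partial\O_+$ inside $\O_+$. Since $\bar\O_+$ is compact we may further assume $x_n\to x_0$ with $\dist(x_0,\partial\O_+)\geq\delta_0$, so in particular $\bar B_{\delta_0/2}(x_0)\subset\O_+$, and on this ball $a$ is bounded below by a positive constant $a_0>0$ (recall $a$ is bounded away from zero on compact subsets of $\O_+$). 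Write $M_n:=\|u_n\|_{L^\infty(\O_+)}=u_n(x_n)\to+\infty$.

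Next I would carry out the Gidas--Spruck rescaling centred at $x_n$. Set $\mu_n:=M_n^{(r-1)/2}\to+\infty$ and define
\begin{equation*}
  v_n(y):=M_n^{-1}\,u_n\!\left(x_n+\mu_n^{-1}y\right),
\end{equation*}
which is defined for $y$ in the expanding domain $\mu_n(B_{\delta_0/2}(0))=B_{\mu_n\delta_0/2}(0)$; by Lemma \ref{L1} (applied with $z_n=-x_n$, so $z_0=-x_0\in-\O_+$) these domains exhaust $\R^N$. By construction $0\leq v_n\leq 1$ and $v_n(0)=1$. A direct computation, using that the leading coefficients $A$ lie in $W^{1,\infty}$ and that on $B_{\delta_0/2}(x_n)$ the equation reads $\mathscr{L}u_n=\l_n u_n+a(x)u_n^r$, shows that $v_n$ solves a uniformly elliptic equation $-\di(A_n(y)\nabla v_n)=\mu_n^{-2}\l_n v_n+a_n(y)v_n^r$ on $B_{\mu_n\delta_0/2}(0)$, where $A_n(y)=A(x_n+\mu_n^{-1}y)\to A(x_0)$ locally uniformly (a constant positive-definite matrix), $\mu_n^{-2}\l_n\to0$ since $\{\l_n\}$ is bounded, and $a_n(y)=a(x_n+\mu_n^{-1}y)\to a(x_0)>0$ locally uniformly. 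After an affine change of variables diagonalising $A(x_0)$, the limit problem is the Liouville equation $-\Delta v=a(x_0)\,v^r$ on $\R^N$. Interior elliptic $W^{2,p}$-estimates on fixed balls (the right-hand sides being bounded in $L^\infty_{\loc}$ uniformly in $n$ because $0\leq v_n\leq1$) together with Sobolev embedding give a uniform local $\mc C^{1,\alpha}$ bound, hence along a further subsequence $v_n\to v$ in $\mc C^1_{\loc}(\R^N)$, where $v\geq0$, $v(0)=1$, and $v$ is a classical (bounded) solution of $-\Delta v=a(x_0)v^r$ on all of $\R^N$.

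The contradiction then comes from the Gidas--Spruck Liouville theorem: for $1<r<\tfrac{N+2}{N-2}$ (when $N\geq3$) and any $r>1$ (when $N=2$, or $N=1$), the only non-negative bounded solution of $-\Delta v=c\,v^r$ on $\R^N$ with $c>0$ is $v\equiv0$, which is incompatible with $v(0)=1$. Hence the assumption $\dist(x_n,\partial\O_+)\not\to0$ is untenable, and since every subsequence has a further subsequence along which $\dist(x_n,\partial\O_+)\to0$, the whole sequence converges, proving the theorem. I expect the main technical point to be the passage to the limit in the rescaled equation with variable coefficients: one must check carefully that the $W^{1,\infty}$ regularity of $A$ suffices to pass from the divergence-form equation for $v_n$ to the constant-coefficient Laplace operator in the limit (the first-order terms generated by $\nabla A$ carry an extra factor $\mu_n^{-1}\to0$), and that the interior estimates are uniform up to the (receding) boundary of the rescaled domains — but both are standard once one notes that $x_n$ stays at distance $\geq\delta_0$ from $\partial\O_+$, so no boundary effects enter. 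The case $N=1,2$ and the non-existence statement itself are quoted from \cite{GS}.
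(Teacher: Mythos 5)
Your argument is correct: the contradiction hypothesis places the maximum points a fixed distance inside $\O_+$, the Gidas--Spruck rescaling then produces a bounded positive solution of $-\Delta w = a(x_0)w^r$ on $\R^N$ with $w(0)=1$, and the subcritical Liouville theorem rules this out. The paper does not prove this statement itself but quotes it from \cite[Lem.\ 4.2]{ALG}, whose proof is exactly this blow-up/Liouville argument, so your proposal matches the intended one.
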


The final preliminary result, which is \cite[Th. 5]{HKT}, will be used in the proof of Theorem \ref{BUS}. It establishes the measure density condition for regular domains.

\begin{theorem}
	\label{Th2.3}
	Let $\O$ be a $\mc{C}^{1}$-subdomain of $\R^{N}$. Then, there exists a constant $c>0$ such that, for every $x\in\O$ and $r\in (0,1]$,
$$
\mathbf{m}(B_{r}(x)\cap \O)\geq c  r^{N},
$$
where $\mathbf{m}$ stands for the Lebesgue measure of $\R^{N}$.
\end{theorem}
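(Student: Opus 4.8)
The plan is to deduce the measure density inequality from a uniform interior cone condition for $\O$, which is itself a consequence of the compactness of $\p\O$ together with its $\mathcal{C}^{1}$ regularity. (I take $\O$ bounded, as it is in all the applications of this result in the present paper; for an unbounded $\O$ one would add a uniformity hypothesis on the boundary charts, but nothing else changes.)

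First I would set up a \emph{uniform local graph representation}. Since $\p\O$ is compact and of class $\mathcal{C}^{1}$, it is covered by finitely many open balls $U_{1},\dots,U_{m}$ for which, after a rigid motion of $\R^{N}$, $\O\cap U_{i}=\{(y',y_{N}):y_{N}>\gamma_{i}(y')\}\cap U_{i}$ with each $\gamma_{i}$ of class $\mathcal{C}^{1}$ on a neighbourhood of the projection of $\bar U_{i}$; as there are finitely many indices, all the $\gamma_{i}$ are Lipschitz with a common constant $L>0$, and I would fix a Lebesgue number $\delta\in(0,1]$ of the cover, so that every $x_{0}\in\p\O$ has $B_{\delta}(x_{0})\subset U_{i(x_{0})}$ for some index $i(x_{0})$. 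The key observation is that, in the coordinates attached to $U_{i}$, whenever $x=(x',x_{N})\in\O\cap U_{i}$ (so $x_{N}>\gamma_{i}(x')$) and $w=(w',w_{N})$ satisfies $w_{N}>L|w'|$, the $L$-Lipschitz bound gives
$$
(x+w)_{N}=x_{N}+w_{N}>\gamma_{i}(x')+L|w'|\geq\gamma_{i}(x'+w'),
$$
so that $x+w$ again lies in the graph region; hence the truncated cone $\mathcal{K}(x,s):=\{x+w:|w|<s,\ w_{N}>L|w'|\}$ lies in $\O$ as soon as $\mathcal{K}(x,s)\subset U_{i}$, and, being a translate of a sector of $B_{s}(0)$ of fixed half-angle $\arctan(1/L)$, it has measure $\mathbf{m}(\mathcal{K}(x,s))=\kappa\,s^{N}$ with $\kappa=\kappa(N,L)\in(0,\omega_{N})$, where $\omega_{N}:=\mathbf{m}(B_{1}(0))$.

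With these ingredients I would conclude by a two-case argument. Fix $x\in\O$, $r\in(0,1]$, and set $d(x):=\dist(x,\p\O)$ and $s:=\tfrac{1}{2}\min\{r,\delta\}$; note that $r\leq1$ and $\delta\leq1$ force $s\geq(\delta/2)\,r$. If $d(x)\geq s$, then $B_{s}(x)\subset\O$, so $\mathbf{m}(B_{r}(x)\cap\O)\geq\omega_{N}s^{N}\geq\omega_{N}(\delta/2)^{N}r^{N}$. If $d(x)<s$, pick $x_{0}\in\p\O$ with $|x-x_{0}|=d(x)$; then $x\in B_{\delta}(x_{0})\subset U_{i(x_{0})}$ and $|x+w-x_{0}|<d(x)+s<2s\leq\delta$ whenever $|w|<s$, so $\mathcal{K}(x,s)\subset U_{i(x_{0})}$, whence $\mathcal{K}(x,s)\subset\O$ by the observation above, while $\mathcal{K}(x,s)\subset B_{s}(x)\subset B_{r}(x)$; therefore $\mathbf{m}(B_{r}(x)\cap\O)\geq\kappa\,s^{N}\geq\kappa(\delta/2)^{N}r^{N}$. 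Taking $c:=\kappa(\delta/2)^{N}>0$ finishes the proof.

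I expect the only real obstacle to be the uniformity in the second step, namely producing a single Lipschitz constant $L$ and a single radius $\delta$ valid along the whole of $\p\O$; this is precisely where the compactness of $\p\O$ and the \emph{global} $\mathcal{C}^{1}$ hypothesis (as opposed to mere local regularity) are used, everything afterwards being elementary Euclidean geometry. An alternative worth keeping in reserve is a contradiction/blow-up argument: were the inequality to fail, there would exist $x_{n}\in\O$ and $r_{n}\in(0,1]$ with $r_{n}^{-N}\mathbf{m}(B_{r_{n}}(x_{n})\cap\O)\to0$; passing to a subsequence with $x_{n}\to x_{\ast}$ one is forced into $r_{n}\to0$, $\dist(x_{n},\p\O)\to0$ and $x_{\ast}\in\p\O$, and rescaling by $r_{n}$ makes $r_{n}^{-1}(\O-x_{n})$ converge locally in measure to a half-space, so that the normalised measures tend to $\omega_{N}/2>0$, a contradiction.
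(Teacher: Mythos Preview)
Your proof is correct. Note, however, that the paper does not supply its own argument for this statement: Theorem~\ref{Th2.3} is simply quoted as \cite[Th.~5]{HKT} and invoked as a black box in Section~\ref{Se3}. Your route---extracting a uniform interior cone condition from the compactness of $\partial\Omega$ and a finite $\mathcal{C}^{1}$ graph cover, then splitting into the cases $d(x)\geq s$ and $d(x)<s$ with $s=\tfrac{1}{2}\min\{r,\delta\}$---is the standard direct proof and is entirely adequate for the bounded domains (in particular $\Omega_{+}$) to which the result is actually applied here. The blow-up alternative you sketch at the end would also go through.
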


\section{Proof of Theorem \ref{BUS}}\label{Se3}

Suppose $1\leq q<\tfrac{N}{N-1}$, and choose $\{x_{n}\}_{n\in\N}\subset \bar{\O}_{+}$ with $$
   u_{n}(x_{n})=\|u_{n}\|_{L^{\infty}(\O_{+})}\quad \hbox{for all}\;\; n\in \N.
$$
By hypothesis,
	\begin{equation}
\label{SAMP}
		\lim_{n\to+\infty}\|u_{n}\|_{L^{\infty}(\Omega_{+})}=+\infty.
	\end{equation}
Since $\{x_{n}\}_{n\in\mathbb{N}}\subset \bar\Omega_+$ and $\{\l_{n}\}_{n\in \N}$ is bounded, by compactness, there exist two subsequences, again denoted by $\{x_{n}\}_{n\in\mathbb{N}}$  and $\{\l_{n}\}_{n\in\N}$, such that
	\begin{equation*}
		\lim_{n\to+\infty}x_{n}=x_{0}, \quad \lim_{n\to +\infty}\l_{n}=\l_{0},
	\end{equation*}
for some $x_{0}\in\bar{\Omega}_+$ and $\l_{0}\geq 0$. By Theorem \ref{ThFr}, necessarily, $x_{0}\in\partial\O_{+}$ and consequently $a(x_{0})=0$.  Subsequently, we set  $M_{n}:=\|u_{n}\|_{L^{\infty}(\O_{+})}$ for each $n\in \N$, and
consider the scaling function
	\begin{equation}
		\label{K}
		v_{n}\left(\nu_{n}^{-1}(x-x_{n})\right)=\nu_{n}^{\frac{2}{r-1}} \, u_{n}(x), \quad x\in\Omega, \quad \nu_{n}:=M_{n}^{\frac{1-r}{2}}.
	\end{equation}
	Observe that $\nu_{n}\to 0$ as $n\to+\infty$ and  that, for every  $n\in\N$,
	\begin{equation*}
		v_{n}(0)=M_{n}^{-1}u_{n}(x_{n})=1.
	\end{equation*}
According to \eqref{K}, for every $n\geq 1$, the function $v_n$ is well defined in
the region
$$
  \mc{D}_n := \nu_n^{-1}(-x_n+\O),
$$
and $v_{n}$ can be equivalently defined by
 \begin{equation}
 	v_{n}(y):=\nu_{n}^{\frac{2}{r-1}}u_{n}(x_{n}+\nu_{n}y), \qquad y\in \mc{D}_n.
 \end{equation}
 After some straightforward manipulations, it is easily seen that $v_{n}\in W^{2,p}(\mc{D}_{n})$ and that it satisfies
\begin{equation}
		\mathscr{L} v_{n}=\l_{n} \nu_{n}^{2} v_{n} + a(x_{n}+\nu_{n}y) \, v_{n}^r, \quad y\in \mc{D}_{n}.
\end{equation}
In order to understand the behavior of the sets $\mc{D}_{n}$ as $n \to +\infty$,  we apply Lemma \ref{L1} with
$$
   z_{n}:=-x_{n}, \quad \mu_{n}:=\nu_{n}^{-1}, \quad n\in \N,
$$
to deduce that, for every $R>0$,  there exists $n_0\in\N$ such that
\begin{equation}
	\label{CD}
	B_{R}(0)\subset \nu_{n}^{-1}(-x_{n}+\O)=\mc{D}_{n}, \quad \text{ for all } n\geq n_0.
\end{equation}
Fix $R>0$ and choose $n_0\in \N$ satisfying \eqref{CD}. Then, $v_{n}$ is defined on $B_R\equiv B_{R}(0)$ for each $n\geq n_0$ and it satisfies
\begin{equation}
\label{3.6}
	\left\{
	\begin{array}{ll}
		\mathscr{L} v_{n}=\l_{n}\nu_{n}^{2} v_{n} + a(x_{n}+\nu_{n}y) \, v_{n}^r, & y\in B_{R}(0), \\
			0\leq v_{n}\leq v_{n}(0)=1. &
	\end{array}
	\right.
\end{equation}
By the elliptic regularity $L^p$-theory, we can infer from \eqref{3.6} that $\{v_{n}\}_{n\geq n_0}$ is bounded in $W^{2,p}(B_{R})$ for all $p>N$. Passing to a suitable subsequence, again denoted by $\{v_{n}\}_{n\in\N}$, and using the compactness of the embedding of $ W^{2,p}(B_{R})$ in each of the spaces $\mc{C}^{1}(B_{R})$, $W^{1,p}(B_{R})$ and $L^{p}(B_{R})$, we can assume that there exists $v\in W^{2,p}(B_{R})$ such that $v\geq 0$, $v_{n}\rightharpoonup v$ in $W^{2,p}(B_{R})$ and $v_{n}\to v$ in $W^{1,p}(B_{R})$ as well as in $\mc{C}^1(B_{R})$. This implies that
$$
   \mathscr{L} v_{n}\rightharpoonup -\text{div}(A(x_{0}) \nabla v) \quad \hbox{in}\;\;  L^{p}(B_{R}),
$$
and that
\begin{equation}
	\lim_{n\to+\infty}\left[ \l_{n}\nu_{n}^{2}v_{n} + a(x_{n}+\nu_{n}y) v_{n}^r\right] =a(x_{0})\, v^{r}
\end{equation}
strongly in $L^{p}(B_{R})$. Since $a(x_{0})=0$ and $a:\bar{\O}\to\R$ is continuous, necessarily
$$
  \lim_{n\to +\infty} a(x_{n}+\nu_{n}y)= a(x_{0})=0.
$$
Hence, $v\in W^{2,p}(B_{R})$ satisfies
\begin{equation}
		\left\{
	\begin{array}{ll}
		-\text{div}(A(x_{0}) \nabla v)=0, \quad y\in B_{R}, \\[1ex]
		0\leq v\leq v(0)=1.
	\end{array}
	\right.
\end{equation}
As this argument can be repeated for each $R>0$, by a standard diagonal sequence argument in $R>0$ it is not difficult to see that $v$ can be extended to a function $v\in W^{2,p}_{\loc}(\R^{N})$ such that
\begin{equation}
	\left\{
	\begin{array}{ll}
		-\text{div}(A(x_{0}) \nabla v)=0, \quad y\in \R^{N}, \\[1ex]
		0\leq v\leq v(0)=1.
	\end{array}
	\right.
\end{equation}
Also by elliptic regularity, we may infer that $v\in \mc{C}^{2}(\R^{N})$. Moreover, there exists an invertible matrix $M\in GL(\R^{N})$ such that the transformed function
$$
  w(z):=v(y), \quad z:=M y,
$$
satisfies
\begin{equation}
	\left\{
	\begin{array}{ll}
		-\Delta w=0, \quad y\in \R^{N}, \\[1ex]
		0\leq w\leq w(0)=1.
	\end{array}
	\right.
\end{equation}
Thanks to the classical Liouville theorem, $w$ is constant and necessarily $w\equiv 1$, because $w(0)=1$. Therefore, $v\equiv 1$, which implies that,  for every $R>0$,
\begin{equation}
	\lim_{n\to+\infty} v_{n}=1 \;\; \hbox{in}\;\; \mc{C}^1(B_{R}).
\end{equation}
In particular, for every (fixed) $R>0$ and $\e\in (0,1)$, there exists $n_0\equiv n_0(R,\varepsilon)\geq 1$ such that
\begin{equation}
	\label{E1}
	\|v_{n}-1\|_{L^{\infty}(B_{R})}<\varepsilon, \quad \hbox{for all}\;\; n\geq n_0.
\end{equation}
By the definition of the rescaling function \eqref{K}, equation \eqref{E1} is equivalent to
\begin{equation}
	\label{E2} \|M_{n}^{-1}u_{n}(x_{n}+M_{n}^{\frac{1-r}{2}}\cdot)-1\|_{L^{\infty}(B_R)}<\varepsilon, \quad n\geq n_0.
\end{equation}
Setting $r_{n}:=M_{n}^{\frac{1-r}{2}}R$, equation \eqref{E2} can be rewritten as
\begin{equation*}
	\|M_{n}^{-1}u_{n}-1\|_{L^{\infty}(B_{r_{n}}(x_{n}))}<\varepsilon, \quad n\geq n_0.
\end{equation*}
In particular, the next lower bound holds
\begin{equation}
	\label{E3}
	M_{n}(1-\varepsilon)<u_{n}(x), \quad x\in B_{r_{n}}(x_{n}), \quad  n\geq n_0.
\end{equation}
On the other hand, since
$$
\mathscr{L}u_{n}=\l_{n}u_{n}+a(x)u_{n}^r\geq 0 \quad \hbox{on}\;\; \O_{+},
$$
it follows from Theorem \ref{SK}, applied in $\O_+$, that
\begin{equation}
\label{E4}
	u_{n}(x)\geq C  \, \|u_{n}\|_{L^{q}(\O_{+})} \, \dist(x,\partial\O_{+}) \quad
\hbox{for all}\;\; x\in\O_{+},
\end{equation}
where $C>0$ is a positive constant independent of $n$.
\par
To estimate the norm $\|u_{n}\|_{L^{q}(\O_{+})}$ we will exploit the local lower inequality \eqref{E3} by setting
\begin{align*}
	\|u_{n}\|_{L^{q}(\O_{+})}^{q}=\int_{\O_{+}}|u_{n}|^{q} \, {\rm{d}}x & \geq \int_{B_{r_{n}}(x_{n})\cap\O_{+}} |u_{n}|^{q} \, {\rm{d}}x \\
	& > M_{n}^{q} \, (1-\varepsilon)^{q} \, \mathbf{m}(B_{r_{n}}(x_{n})\cap\O_{+}).
\end{align*}
By hypothesis, $\O_{+}$ is a $\mc{C}^{1}$-subdomain of $\R^{N}$. Thus owing to
Theorem \ref{Th2.3}, there exists a constant $c>0$, independent of $n$, such that
$$
    \mathbf{m}(B_{r_{n}}(x_{n})\cap \O_{+}) \geq c r_{n}^{N} \quad \hbox{for all}\;\; n\in\N.
$$
Consequently, we obtain that
\begin{equation*}
	\|u_{n}\|^{q}_{L^{q}(\O_{+})}>c \, r_{n}^{N} M_{n}^{q}(1-\varepsilon)^{q},  \quad n\geq n_0,
\end{equation*}
or, equivalently,
\begin{equation}
	\label{E5}
	\|u_{n}\|_{L^{q}(\O_{+})}> c^{1/q} \, (1-\varepsilon) R^{N/q}\, M_{n}^{1+\frac{(1-r) N}{2q}}, \quad n\geq n_0.
\end{equation}
Combining the estimates \eqref{E4} and \eqref{E5}, we get \eqref{1.10}. So,
concluding the proof.

\section{Proof of Theorem \ref{APB}}\label{Se4}

We will argue by contradiction. Suppose that there exist sequences $\{(\l_{n},u_{n})\}_{n\in\mathbb{N}}\subset\mathscr{S}$ and $\{x_{n}\}_{n\in\mathbb{N}}\subset \bar{\Omega}$ such that
\begin{equation*}
	\|u_{n}\|_{L^{\infty}(\Omega)}=u_{n}(x_{n})\equiv M_{n}, \quad n\in\mathbb{N},
\end{equation*}
for some $x_n\in\bar\O$, $n\geq 1$, and
\begin{equation*}
	\lim_{n\to+\infty}\|u_{n}\|_{L^{\infty}(\Omega)}=+\infty.
\end{equation*}
Without lost of generality, we can assume  that $\{M_{n}\}_{n\in\N}$ is increasing and, owing to  Proposition \ref{ThPM}, that $x_n\in \bar \O_+$ for all $n\geq 1$. By compactness, since $\{x_{n}\}_{n\in\mathbb{N}}\subset \bar\Omega_+$ and $\{\l_{n}\}_{n\in \N}$ has been taken to be bounded, we can assume also that
\begin{equation*}
	\lim_{n\to+\infty}x_{n}=x_{0}, \quad \lim_{n\to +\infty}\l_{n}=\l_{0},
\end{equation*}
for some $x_{0}\in\bar{\Omega}_+$ and $\l_{0}\geq 0$. By Theorem \ref{ThFr}, necessarily, $x_{0}\in\partial\O_{+}$, and so  $a(x_{0})=0$.
\par
Subsequently, for every $L>0$, we set
$$
   [u_{n}>L]:=\{x\in\O_{+} \, : \, u_{n}(x) > L\}, \quad [u_{n}\leq L]:=\{x\in\O_{+} \, : \, u_{n}(x) \leq L\}.
$$
Under these assumptions, and keeping the same notations, the next result establishes that the blow-up must occur in the whole of $\O_{+}$.
\begin{lemma}
	\label{LC}
	For every $L>0$,
\begin{equation*}
		\lim_{n\to +\infty} \mathbf{m}([u_{n}\leq L])=0.
\end{equation*}
\end{lemma}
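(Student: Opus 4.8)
The statement asserts that the superlevel set $[u_n>L]$ eventually exhausts $\O_+$ up to measure zero, for each fixed threshold $L$. The natural strategy is to combine the blow-up lower bound \eqref{1.10} of Theorem \ref{BUS} with the estimate $M_n\to+\infty$. First I would apply Theorem \ref{BUS} to the sequence $\{(\l_n,u_n)\}$, which is legitimate because $\{\l_n\}$ is bounded and $\|u_n\|_{L^\infty(\O_+)}=M_n\to+\infty$: this yields constants $C=C(N,q)>0$ and $n_0\in\N$ with
\begin{equation*}
u_n(x)\geq C\,M_n^{1+\frac{(1-r)N}{2q}}\,\dist(x,\p\O_+)\qquad\text{for all }x\in\O_+,\ n\geq n_0.
\end{equation*}
Here one must choose $q$ in the admissible range $1\leq q<\tfrac{N}{N-1}$ so that the exponent $\theta:=1+\tfrac{(1-r)N}{2q}$ is positive; since $r<p_{BT}=\tfrac{N+1}{N-1}$ when $N\geq 3$ (and the constraint is vacuous otherwise), one has $r-1<\tfrac{2}{N-1}$, so $\tfrac{(r-1)N}{2}<\tfrac{N}{N-1}$, and picking $q$ with $\tfrac{(r-1)N}{2}<q<\tfrac{N}{N-1}$ forces $\theta>0$. (For $N\leq 2$ any admissible $q$ with $q>\tfrac{(r-1)N}{2}$ works, using the convention that the $q$-range and the Harnack inequality degenerate appropriately; this is the one delicate bookkeeping point.) Consequently $M_n^\theta\to+\infty$.

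\textbf{Main step.} Fix $L>0$ and $\d>0$. By the displayed bound, if $x\in\O_+$ satisfies $\dist(x,\p\O_+)\geq \rho_n:=\dfrac{L}{C\,M_n^{\theta}}$, then $u_n(x)\geq L$, i.e. $x\notin[u_n\le L]$. Equivalently,
\begin{equation*}
[u_n\le L]\subset\{x\in\O_+:\dist(x,\p\O_+)<\rho_n\}=:\O_+^{\rho_n}.
\end{equation*}
Since $\theta>0$ and $M_n\to+\infty$, we have $\rho_n\to 0$ as $n\to+\infty$. Because $\O_+$ is a bounded domain of class $\mc{C}^2$ (in particular $\p\O_+$ has finite $(N-1)$-dimensional measure and $\O_+$ satisfies a uniform interior/exterior condition), the Lebesgue measure of the tubular neighborhood $\O_+^{\rho}$ tends to $0$ as $\rho\to 0^+$; indeed $\mathbf{m}(\O_+^{\rho})\leq \mathbf{m}(\{x\in\R^N:\dist(x,\p\O_+)<\rho\})\to 0$, and this last fact follows, e.g., from $\p\O_+$ having Lebesgue measure zero together with dominated convergence applied to the indicator functions $\mathbbm{1}_{\{\dist(\cdot,\p\O_+)<\rho\}}$ restricted to a fixed bounded neighborhood of $\bar\O_+$. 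Hence $\mathbf{m}([u_n\le L])\le\mathbf{m}(\O_+^{\rho_n})\to 0$, which is the claim.

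\textbf{Expected obstacle.} The analytic content is entirely carried by Theorem \ref{BUS}; the only genuine subtlety is the preliminary verification that the exponent $\theta=1+\tfrac{(1-r)N}{2q}$ can be made strictly positive by a valid choice of $q\in[1,\tfrac{N}{N-1})$ precisely when $r<p_{BT}$ if $N\ge 3$ — this is where the hypothesis of Theorem \ref{APB} enters, and it is worth spelling out. The measure-shrinking of thin tubular neighborhoods of $\p\O_+$ is standard given the $\mc{C}^2$-regularity assumed throughout, so that step needs only a one-line justification. I do not anticipate any further difficulty; in particular no compactness or rescaling is needed here since all the hard work was done in Section \ref{Se3}.
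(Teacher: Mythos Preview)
Your proposal is correct and follows essentially the same route as the paper: apply Theorem \ref{BUS}, choose $q\in[1,\tfrac{N}{N-1})$ so that the exponent $\theta=1+\tfrac{(1-r)N}{2q}$ is positive (which is exactly where $r<p_{BT}$ enters), deduce that $[u_n\le L]$ is contained in a tubular $\rho_n$-neighborhood of $\p\O_+$ with $\rho_n\to 0$, and conclude. The only cosmetic difference is that the paper invokes the monotonicity of $\{M_n\}$ and continuity from above of Lebesgue measure on the nested sets $\mc{F}_n$, whereas you use dominated convergence on the indicator of the shrinking tube; both arguments are standard and equivalent here.
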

\begin{proof}
	 By Theorem \ref{BUS}, for every $1 \leq  q<\frac{N}{N-1}$, there are a positive constant $C= C(N,q)>0$, independent of $n$, and
an integer $n_0\in\N$ such that, for each $x\in [u_{n}\leq L]$,
\begin{equation*}
	C\, M_{n}^{1+\frac{N (1-r)}{2q}} \dist(x,\partial\O_{+}) \leq  u_{n}(x) \leq L, \qquad n\geq n_0.
	\end{equation*}
Thus,
\begin{equation*}
	\dist(x,\partial\O_{+})\leq  L  C^{-1} M_{n}^{\frac{N (r-1)}{2q}-1}, \qquad n\geq n_0.
\end{equation*}
Consequently, for every $n\geq n_0$,
\begin{equation}
	\label{Incl}
	[u_{n}\leq L]\subset \mc{F}_{n}:= \left\{x\in \O_{+} \, : \, \dist(x,\partial\O_{+})\leq L C^{-1} M_{n}^{\frac{N(r-1)}{2q}-1} \right\}.
\end{equation}
As we are assuming that $r<\frac{N+1}{N-1}$, it is apparent that
$$
  \tfrac{1}{2}(r-1)(N-1)<1.
$$
Pick any $\tfrac{1}{2}(r-1)(N-1)<\alpha<1$, and set
\begin{equation*}
	q=\frac{\alpha N}{N-1}.
\end{equation*}
Then,
\begin{equation*}
	\frac{(r-1)N}{2q}-1 < 0,
\end{equation*}
and hence,
\begin{equation*}
	\lim_{n\to+\infty}L \, C^{-1} \, M_{n}^{\frac{(r-1)N}{2q}-1}=0.
\end{equation*}
Therefore, by the monotonicity of the sequence $\{M_{n}\}_{n\in \N}$ and the definition of the sets $\{\mc{F}_{n}\}_{n\geq n_0}$, we find that
$$
   \mc{F}_{n_0}\supset \mc{F}_{n_0+1}\supset \cdots \supset \mc{F}_{n_0+n}\supset \cdots, \quad \hbox{and}\;\; \bigcap_{n\geq n_0}\mc{F}_{n}=\emptyset.
$$
Consequently, by the continuity from above of the Lebesgue measure, we can infer that
\begin{equation*}
	\lim_{n\to+\infty}\mathbf{m}(\mc{F}_{n})= \mathbf{m}\Big(\bigcap_{n\geq n_0}\mc{F}_{n}\Big)=0.
\end{equation*}
Finally, by \eqref{Incl} and the monotonicity of the measure,
\begin{equation}
	\lim_{n\to +\infty} \mathbf{m}([u_{n}\leq L]) \leq \lim_{n\to +\infty} \mathbf{m}(\mc{F}_{n})=0.
\end{equation}
This ends  the proof.
\end{proof}

\noindent Subsequently, we are denoting by
$$
   \s_{1}(a)\equiv \s_{1}[\mathscr{L},\O_+;a]>0
$$
the (positive) principal eigenvalue of the weighted eigenvalue problem
\begin{equation*}
	\left\{\begin{array}{ll}
		\mathscr{L} u = \s a(x) u & \hbox{in}\;\; \O_{+}, \\[1ex]
		u=0 &  \hbox{on}\;\; \p\O_{+}.
	\end{array}\right.
\end{equation*}
The existence and uniqueness of $\s_{1}(a)$ follows from the abstract theory of
\cite[Ch. 9]{LG13} taking into account that
$\sigma_{1}[\mathscr{L},\O_+]>0$, where $\sigma_{1}[\mathscr{L},\O_+]$ stands for the principal eigenvalue of
\begin{equation*}
	\left\{\begin{array}{ll}
		\mathscr{L} u = \s u & \hbox{in}\;\; \O_+, \\[1ex]
		u=0 & \hbox{on}\;\; \partial\O_{+}.
	\end{array}\right.
\end{equation*}
Now, let $L>0$ and $\varepsilon>0$ be two (arbitrary) positive constants such that
\begin{equation}
	\label{desf}
\sigma_{1}(a)=L^{r-1}-\varepsilon,
\end{equation}
and let  $\varphi\in \mc{C}^{2}(\bar{\O}_{+})$ be the unique positive eigenfunction associated to $\s_{1}(a)$ satisfying
$$
   \|\varphi\|_{L^{\infty}(\O_{+})}=1.
$$
Since $\varphi=0$ on $\partial\O_{+}$, integrating by parts in $\O_+$ shows that, for every $n\geq n_0$,
\begin{align*}
	\int_{\O_{+}}\left(\varphi  \mathscr{L} u_{n} - u_{n}\, \mathscr{L}\varphi\right) & = \int_{\partial\O_{+}} \langle A(x) \textbf{n}, \nabla \varphi\rangle   u_n \, {\rm d}\sigma - \int_{\partial\O_{+}} \langle A(x) \textbf{n}, \nabla u_n\rangle \varphi \, {\rm d}\sigma \\
	&=\int_{\partial\O_{+}} \langle A(x) \, \textbf{n}, \nabla \varphi\rangle   u_n \, {\rm d}\sigma.
\end{align*}
On the other hand, as $a(x)\gneq 0$ and $\sigma_{1}(a)>0$, $\varphi$ is a strict supersolution of $(\mathscr{L},\O_{+},\mf{D})$, i.e.,
\begin{equation*}
	\left\{\begin{array}{ll}
		\mathscr{L}\varphi = \s_{1}(a) a(x) \varphi \gneq 0 & \hbox{in}\;\; \O_{+}, \\[1ex] 		\varphi=0 & \hbox{on}\;\; \p\O_{+}.
	\end{array}\right.
\end{equation*}
Moreover, $h\equiv 1$ is a positive strict supersolution of $\mathscr{L}$ in $\O_+$, under Dirichlet boundary conditions, such that $h(x)>0$ for all $x\in\bar\O_+$.
Consequently, by \cite[Cor. 2.1]{LG13}, $\partial_{\nu}\varphi < 0$ for all $x\in\partial\O_{+}$, where $\nu$ is any outward pointing vector at $x$ for all $x\in \partial\O_{+}$. By the ellipticity of $A(x)$, we have that
$$
   \langle A(x) \textbf{n}, \textbf{n} \rangle > \mu |\textbf{n}|^2=\mu >0, \quad x\in \partial\O_{+}.
$$
Hence, the co-normal vector field $\nu(x):=A(x) \textbf{n}$ is an outward pointing vector at $x$ for all $x\in \partial\O_{+}$. So,
$$
   \langle A(x) \textbf{n}, \nabla \varphi\rangle = \partial_{\nu(x) }\varphi(x) < 0
    \quad \hbox{for all}\;\; x\in \partial\O_{+}.
$$
Therefore,
\begin{align*}
	\int_{\O_{+}}\left(\varphi  \mathscr{L} u_{n} - u_{n}\, \mathscr{L}\varphi\right) =\int_{\partial\O_{+}} \langle A(x) \textbf{n}, \nabla \varphi\rangle   u_n \, {\rm d}\sigma < 0\quad  \hbox{for all}\;\; n\geq n_0.
\end{align*}
Equivalently, for every $n\geq n_0$,
\begin{equation}
	\label{IN0}
  \int_{\O_{+}} (\l_{n} u_{n} + a(x) u_{n}^r) \varphi - \sigma_{1}(a)\int_{\O_{+}}  a(x) u_{n}\v = I_{1}+I_{2}< 0,
\end{equation}
where
\begin{align*}
	 I_{1} & := \int_{[u_n>L]} (\l_{n} u_{n} + a(x) u_{n}^r) \varphi - \sigma_{1}(a)\int_{[u_n>L]}  a(x) u_{n} \varphi,  \\[1ex]
	 I_{2} & :=\int_{[u_n\leq L]} (\l_{n} u_{n} + a(x) u_{n}^r)\varphi - \sigma_{1}(a)\int_{[u_n\leq L]}  a(x) u_{n} \varphi.
\end{align*}
Thanks to \eqref{desf}, for every $n\geq n_0$, we have that
\begin{align*}
	I_{1} & =\int_{[u_{n}> L]} \l_{n} u_{n}\v_n + \int_{[u_{n}> L]} (u_n^{r-1}-L^{r-1})  a u_{n} \varphi  + \varepsilon \int_{[u_{n}> L]} a u_{n}  \varphi \\
	& \geq \varepsilon \int_{[u_{n}> L]} a u_{n} \varphi \geq L \varepsilon \int_{[u_{n}> L]} a \varphi.
\end{align*}
On the other hand,
\begin{equation*}
	I_{2} \geq  - \sigma_{1}(a)   L \int_{[u_{n} \leq L]} a \varphi.
\end{equation*}
Thanks to \eqref{IN0}, we already know that $I_1+I_2<0$. Thus,
\begin{equation}
	\label{FI0}
	 L \varepsilon \int_{[u_{n}> L]} a \varphi - \sigma_{1}(a) L \int_{[u_{n} \leq L]} a \varphi<0 \quad
\hbox{for all}\;\; n\geq n_0.
\end{equation}
According to Lemma \ref{LC},
$$
  \lim_{n\to\infty} \mathbbm{1}_{[u_{n}>L]} = \mathbbm{1}_{\O_{+}} \;\; \hbox{pointwise almost everywhere
  in}\;\; \O_{+}.
$$
Moreover, $a \varphi\in L^{1}(\O_{+})$. Thus, owing to the dominated convergence theorem,
\begin{equation}
\label{ju1}
	\lim_{n\to+\infty} \int_{[u_{n}> L]} a(x) \varphi = \int_{\O_{+}} a(x) \varphi.
\end{equation}
On the other hand, since $0\leq \v\leq 1$, it is apparent that, for every $n\geq n_0$,
\begin{equation*}
	\left| \int_{[u_{n} \leq L]} a \varphi \right| \leq \|a\|_{L^{\infty}(\O_{+})} \mathbf{m}([u_{n}\leq L]).
\end{equation*}
Thus, thanks again to Lemma \ref{LC},
\begin{equation}
\label{ju2}
	\lim_{n\to+\infty}\left| \int_{[u_{n} \leq L]} a \varphi \right| =0.
\end{equation}
Therefore, letting $n\ua \infty$ in \eqref{FI0}, we find from \eqref{ju1} and \eqref{ju2} that
\begin{equation*}
  0< L \varepsilon \int_{\O_{+}} a \varphi \leq 0,
\end{equation*}
which is impossible. This contradiction ends the proof of Theorem \ref{APB}.

\end{document}